\newcommand{\changefont}{\fontsize{7}{9}\selectfont}
\newcolumntype{L}[1]{>{\raggedright\let\newline\\\arraybackslash\hspace{0pt}}m{#1}}
\newcolumntype{C}[1]{>{\centering\let\newline\\\arraybackslash\hspace{0pt}}m{#1}}
\newcolumntype{R}[1]{>{\raggedleft\let\newline\\\arraybackslash\hspace{0pt}}m{#1}}
\newcommand{\cut}[1]{{}}
\newcommand{\EE}{{\mathbb{E}}} 				
\newcommand{\PP}{\mathbb{P}} 				
\newcommand{\RR}{\mathbb{R}}				
\DeclareMathOperator*{\argmin}{arg\,min}
\newcommand{\DeclareAutoPairedDelimiter}[3]{%
	\expandafter\DeclarePairedDelimiter\csname Auto\string#1\endcsname{#2}{#3}%
	\begingroup\edef\x{\endgroup
		\noexpand\DeclareRobustCommand{\noexpand#1}{%
			\expandafter\noexpand\csname Auto\string#1\endcsname*}}%
	\x}
\DeclareAutoPairedDelimiter{\p}{(}{)} 					
\DeclareAutoPairedDelimiter{\sp}{[}{]} 					
\DeclareAutoPairedDelimiter{\abs}{|}{|} 					
\DeclareAutoPairedDelimiter{\cp}{\{}{\}} 				
\DeclareAutoPairedDelimiter{\dotp}{\langle}{\rangle} 	
\DeclareAutoPairedDelimiter{\n}{\Vert}{\Vert} 			
\DeclareAutoPairedDelimiter{\cl}{\lceil}{\rceil}
\newcommand{\cO}{{\mathcal{O}}}
\newcommand{\bc}{\begin{center}}
\newcommand{\ec}{\end{center}}
\newcommand{\bdm}{\begin{displaymath}}
\newcommand{\edm}{\end{displaymath}}
\newcommand{\beq}{\begin{equation}}
\newcommand{\eeq}{\end{equation}}
\newcommand{\bfl}{\begin{flushleft}}
\newcommand{\efl}{\end{flushleft}}
\newcommand{\bt}{\begin{tabbing}}
\newcommand{\et}{\end{tabbing}}
\newcommand{\beqn}{\begin{align}}
\newcommand{\eeqn}{\end{align}}
\newcommand{\beqs}{\begin{align*}} 
\newcommand{\eeqs}{\end{align*}}  
\newtheoremstyle{Fancyplain}
{\topsep}   
{\topsep}   
{\itshape}  
{0pt}       
{\bfseries} 
{}         
{5pt plus 1pt minus 1pt} 
{\thmname{#1} \thmnumber{#2}. \thmnote{\normalfont\bfseries#3.}}
\theoremstyle{Fancyplain}
\newtheorem{thm}{Theorem}
\newtheorem{lem}{Lemma}
\newtheorem{cor}[lem]{Corollary}
\newtheorem{prop}[lem]{Proposition}
\crefname{thm}{Thm.}{Thms.}
\Crefname{thm}{Theorem}{Theorems}
\crefname{lem}{Lem.}{Lems.}
\Crefname{lem}{Lemma}{Lemmas}
\crefname{cor}{Cor.}{Cors.}
\Crefname{cor}{Corollary}{Corollaries}
\crefname{prop}{Prop.}{Props.}
\Crefname{prop}{Proposition}{Propositions}
\newtheoremstyle{Fancydefinition}
{\topsep}   
{\topsep}   
{\normalfont}  
{0pt}       
{\bfseries} 
{}         
{5pt plus 1pt minus 1pt} 
{\thmname{#1} \thmnumber{#2}. \thmnote{\normalfont\bfseries#3.}}
\theoremstyle{Fancydefinition}
\newtheorem{defn}[lem]{Definition}
\newtheorem{rem}{Remark}
\newtheorem{asmp}{Assumption}
\crefname{defn}{Defn.}{Defns.}
\Crefname{defn}{Definition}{Definitions}
\crefname{example}{Ex.}{Exs.}
\Crefname{example}{Example}{Examples}
\crefname{xca}{Ex.}{Exs.}
\Crefname{xca}{Exercise}{Exercises}
\crefname{rem}{Rem.}{Rems.}
\Crefname{rem}{Remark}{Remarks}
\crefname{asmp}{Asmp.}{Asmps.}
\Crefname{asmp}{Assumption}{Assumptions}
\crefname{section}{Sec.}{Secs.}
\Crefname{section}{Section}{Sections}
\numberwithin{equation}{section}
\numberwithin{figure}{section}
\DeclareCiteCommand{\parencite}[\mkbibparens]
  {\usebibmacro{prenote}}
  {\usebibmacro{citeindex}%
    \printtext[bibhyperref]{\usebibmacro{cite}}}
  {\multicitedelim}
  {\usebibmacro{postnote}}
\newcommand{\eps}{\epsilon}
\algrenewcommand\algorithmicindent{1.0em}
\begin{document}
\title{Breaking the Span Assumption Yields Fast Finite-Sum Minimization}

\author[1]{Robert Hannah\thanks{Corresponding author: \href{mailto:RobertHannah89@gmail.com}{RobertHannah89@gmail.com}}}
\author[1]{Yanli Liu\thanks{\href{mailto:yanli@math.ucla.edu}{yanli@math.ucla.edu}}}
\author[2]{Daniel O'Connor\thanks{\href{mailto:daniel.v.oconnor@gmail.com}{daniel.v.oconnor@gmail.com}}}
\author[1]{Wotao Yin\thanks{\href{mailto:WotaoYin@math.ucla.edu}{WotaoYin@math.ucla.edu}}}
\affil[1]{Department of Mathematics, University of California, Los Angeles}
\affil[2]{Department of Radiation Oncology, University of California, Los Angeles}
\setcounter{Maxaffil}{0}
\renewcommand\Affilfont{\itshape\small}

\date{\today}

\maketitle

\begin{abstract}
In this paper, we show that SVRG and SARAH can be modified to be fundamentally faster than all of the other standard algorithms that minimize the sum of $n$ smooth functions, such as SAGA, SAG, SDCA, and SDCA without duality. Most finite sum algorithms follow what we call the ``span assumption'': Their updates are in the span of a sequence of component gradients chosen in a random IID fashion. In the big data regime, where the condition number $\kappa=\mathcal{O}(n)$, the span assumption prevents algorithms from converging to an approximate solution of accuracy $\epsilon$ in less than $n\ln(1/\epsilon)$ iterations. SVRG and SARAH do not follow the span assumption since they are updated with a hybrid of full-gradient and component-gradient information. We show that because of this, they can be up to $\Omega(1+(\ln(n/\kappa))_+)$ times faster. In particular, to obtain an accuracy $\epsilon = 1/n^\alpha$ for $\kappa=n^\beta$ and $\alpha,\beta\in(0,1)$, modified SVRG requires $\mathcal{O}(n)$ iterations, whereas algorithms that follow the span assumption require $\mathcal{O}(n\ln(n))$ iterations. Moreover, we present lower bound results that show this speedup is optimal, and provide analysis to help explain why this speedup exists. With the understanding that the span assumption is a point of weakness of finite sum algorithms, future work may purposefully exploit this to yield even faster algorithms in the big data regime.

\end{abstract}

\section{Introduction}
Finite sum minimization is an important class of optimization problem that appears
in many applications in machine learning and other areas. We consider
the problem of finding an approximation $\hat{x}$ to the minimizer $x^{*}$
of functions $F:\RR^{d}\to\RR$ of the form:
\begin{align}
F\p x & =f(x)+\psi(x)=\frac{1}{n}\sum_{i=1}^{n}f_{i}\p x +\psi\p{x}.\label{eq:Average-of-fi}
\end{align}
We assume each function $f_{i}$ is smooth\footnote{A function $f$ is $L$-smooth if it has an $L$-Lipschitz gradient $\nabla f$}, and possibly nonconvex; $\psi$ is proper, closed, and convex; and
the sum $F$ is strongly convex and smooth. It has become well-known that
under a variety of assumptions, functions of this form can be
minimized much faster with variance reduction (VR) algorithms that
specifically exploit the finite-sum structure. When each $f_{i}$
is $\mu$-strongly convex and $L$-smooth, and $\psi=0$, SAGA \parencite{DefazioBachLacoste-Julien2014_saga},
SAG \parencite{RouxSchmidtBach2012_stochastic}, Finito/Miso \parencite{DefazioDomkeCaetano2014_finito,Mairal2013_optimization},
SVRG \parencite{JohnsonZhang2013_accelerating}, SARAH \parencite{NguyenLiuScheinbergTakac2017_sarah},
SDCA \parencite{Shalev-ShwartzZhang2013_stochastic}, and SDCA without duality \parencite{Shalev-Shwartz2016_sdca} can find a vector
$\hat{x}$ with expected suboptimality $\EE\p{f\p{\hat{x}}-f\p{x^{*}}}=\cO\p{\eps}$
with only $\cO\p{\p{n+L/\mu}\ln\p{1/\eps}}$ calculations
of component gradients $\nabla f_{i}\p x$. This can be up to $n$ times faster than (full)
gradient descent, which takes $\cO\p{n L/\mu \ln\p{1/\eps}}$ gradients.
These algorithms exhibit sublinear convergence for non-strongly convex
problems\footnote{SDCA must be modified however with a dummy regularizer.}. Various results also exist for nonzero convex $\psi$.

Accelerated VR algorithms have also been proposed. Katyusha \parencite{Allen-Zhu2017_katyusha}
is a primal-only Nesterov-accelerated VR algorithm that uses only component gradients.
It is based on SVRG and has complexity $\cO\p{\p{n+\sqrt{n\kappa}}\ln\p{1/\eps}}$) for condition number $\kappa$ which is defined as $L/\mu$. In \parencite{Defazio2016_simple}, the author devises an accelerated SAGA algorithm that attains the same complexity using component proximal steps. In \parencite{LanZhou2017_optimal}, the author devises an accelerated primal-dual VR algorithm.
There also exist ``catalyst'' \parencite{LinMairalHarchaoui2015_universala}
accelerated methods \parencite{LinLuXiao2014_accelerateda,Shalev-ShwartzZhang2016_accelerated}.
However, catalyst methods appear to have a logarithmic complexity penalty
over Nesterov-accelerated methods.

In \parencite{LanZhou2017_optimal}, authors show that a class of algorithms
that includes SAGA, SAG, Finito (with replacement), Miso, SDCA without duality, etc. have
complexity $K(\epsilon)$ lower bounded by $\Omega\p{\p{n+\sqrt{n\kappa}}\ln\p{1/\eps}}$ for problem dimension $d\geq 2K(\epsilon)$. More precisely, the lower bound applies to algorithms that satisfy what we will call the \textbf{span condition}. That is 
\begin{align}
x^{k+1} & \in x^{0}+\text{span}\cp{\nabla f_{i_{0}}\p{x^{0}},\nabla f_{i_{1}}\p{x^{1}},\ldots,\nabla f_{i_{k}}\p{x^{k}}}\label{eq:SpanCondition}
\end{align}
for some fixed IID random variable $i_k$ over the indices $\cp{1,\ldots,n}$.
Later, \parencite{WoodworthSrebro2016_tight} and \parencite{ArjevaniShamir2016_dimensionfreea}
extend lower bound results to algorithms that do not follow the span assumption: SDCA, SVRG, SARAH, accelerated SAGA, etc.;
but with a smaller lower bound of $\Omega\p{n+\sqrt{n\kappa}\ln\p{1/\eps}}$.
The difference in these two expressions was thought to be a proof artifact
that would later be fixed. 

However we show a surprising result in Section \ref{sec:OptimalSVRG}, that SVRG, and SARAH can be fundamentally
faster than methods that satisfy the span assumption, with the full gradient steps playing a critical role in their speedup. More precisely, for $\kappa=\cO\p n$,
SVRG and SARAH can be modified to reach an accuracy of $\eps$ in
$\cO((\frac{n}{1+(\ln\p{n/\kappa})_+} )\ln\p{1/\eps})$ gradient calculations\footnote{We define $(a)_+$ as $\max\cp{a,0}$ for $a\in\RR$.}, instead of the $\Theta(n\ln(1/\epsilon))$ iterations required for algorithms that follow the span condition. 

We also improve the lower bound of \parencite{ArjevaniShamir2016_dimensionfreea}
to $\Omega(n+(\frac{n}{1+\p{\ln\p{n/\kappa}}_+}+\sqrt{n\kappa})\ln\p{1/\eps})$ in Section \ref{sec:Optimality}. That is, the complexity $K(\epsilon)$ of a very general class of algorithm that includes all of the above satisfies the lower bound:
\begin{align}
K\p{\epsilon} & =\begin{cases}
\Omega(n+\sqrt{n\kappa}\ln\p{1/\eps}), &\text{ for } n=\cO\p{\kappa},\\
\Omega(n+\frac{n}{1+\p{\ln\p{n/\kappa}}_+}\ln\p{1/\eps}), &\text{ for } \kappa=\cO\p{n}.
\end{cases}
\end{align}
Hence when $\kappa=\cO\p n$ our modified SVRG has optimal complexity, and when $n=\cO\p{\kappa}$, Katyusha is optimal. 

SDCA doesn't quite follow the span assumption. Also the dimension $n$ of the dual space on which the algorithm runs is inherently small in comparison to $k$, the number of iterations. We complete the picture using different arguments, by showing that its complexity is greater than $\Omega(n\ln(1/\epsilon))$ in Section \ref{sec:SDCA}, and hence SDCA doesn't attain this logarithmic speedup. We leave the analysis of accelerated SAGA and accelerated SDCA to future work.

Our results identify a significant obstacle to high performance when $n\gg\kappa$. The speedup that SVRG and SARAH can be modified to attain in this scenario is somewhat accidental since their original purpose was to minimize memory overhead. However, with the knowledge that this assumption is a point of weakness for VR algorithms, future work may more purposefully exploit this to yield better speedups than SVRG and SARAH can currently attain. Though the complexity of SVRG and SARAH can be made optimal to within a constant factor, this factor is somewhat large, and could potentially be reduced substantially.

Having $n\gg\kappa$, which has been referred to as the ``big data
condition'', is rather common: For instance \parencite{RouxSchmidtBach2012_stochastic}
remarks that $\kappa=\sqrt{n}$ is a nearly optimal choice for regularization for empirical risk minimization in some scenarios, \parencite{SridharanShalev-shwartzSrebro2009_fast} considers
$\kappa=\sqrt{n}$, and \parencite{EbertsSteinwart2011_optimal} considers
$\kappa=n^{\beta}$ for $\beta<1$. So for instance, we now have the following corollary (which will follow from Corollary \ref{Cor:OptimalUpperBound} ahead):

\begin{cor} \label{cor:nlogn-to-log}
To obtain accuracy $\epsilon = 1/n^\alpha$ for $\kappa=n^\beta$ and $\alpha,\beta\in\p{0,1}$, modified SVRG requires $\cO\p{n}$ iterations,  whereas algorithms that follow the span assumption require $\cO\p{n\ln\p{n}}$ iterations \parencite{LanZhou2017_optimal} for sufficiently large $d$.
\end{cor}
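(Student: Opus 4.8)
The plan is to derive Corollary~\ref{cor:nlogn-to-log} as a direct specialization of the general upper bound referenced as Corollary~\ref{Cor:OptimalUpperBound} (the modified-SVRG complexity) together with the Lan--Zhou lower bound \parencite{LanZhou2017_optimal} for span-condition algorithms. Concretely, we substitute $\kappa = n^\beta$ and $\eps = 1/n^\alpha$ into each of the two complexity expressions and simplify, tracking only the dependence on $n$ with $\alpha,\beta$ fixed in $(0,1)$.

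First I would handle the modified-SVRG side. By the (future) Corollary~\ref{Cor:OptimalUpperBound}, in the regime $\kappa = \cO(n)$ the iteration complexity is $\cO\!\big(\tfrac{n}{1+(\ln(n/\kappa))_+}\ln(1/\eps)\big)$. With $\kappa = n^\beta$ and $\beta < 1$ we have $n/\kappa = n^{1-\beta} \to \infty$, so $(\ln(n/\kappa))_+ = (1-\beta)\ln n$, giving a denominator $\Theta(\ln n)$. With $\eps = 1/n^\alpha$ we have $\ln(1/\eps) = \alpha \ln n$. Hence the complexity is
\begin{align}
\cO\!\left(\frac{n}{(1-\beta)\ln n}\cdot \alpha\ln n\right) = \cO\!\left(\frac{\alpha}{1-\beta}\, n\right) = \cO(n),
\end{align}
since $\alpha/(1-\beta)$ is a constant. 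One should also check that $\kappa = n^\beta = \cO(n)$ so that this branch of the bound indeed applies, which is immediate as $\beta < 1$.

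Next I would handle the span-assumption side. By \parencite{LanZhou2017_optimal}, any algorithm satisfying the span condition~\eqref{eq:SpanCondition} has complexity $\Omega\!\big((n+\sqrt{n\kappa})\ln(1/\eps)\big)$, valid once the dimension $d$ is at least of order the number of iterations (hence the ``sufficiently large $d$'' proviso in the statement). Substituting $\kappa = n^\beta$ gives $\sqrt{n\kappa} = n^{(1+\beta)/2} = \cO(n)$ since $\beta < 1$, so the leading term is $n + \cO(n) = \Theta(n)$; multiplying by $\ln(1/\eps) = \alpha\ln n$ yields $\Theta(n\ln n)$. Thus such algorithms require $\Omega(n\ln n)$ iterations, matching the claimed $\cO(n\ln n)$ figure (one may note the matching upper bound $\cO((n+\sqrt{n\kappa})\ln(1/\eps)) = \cO(n\ln n)$ is attained e.g.\ by SAGA, so $\Theta(n\ln n)$ is exact here).

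The main obstacle is not in these substitutions, which are routine, but in ensuring the regime hypotheses of the cited results are genuinely met: namely that $\alpha,\beta\in(0,1)$ places us squarely in the $\kappa=\cO(n)$ branch where the logarithmic speedup of modified SVRG is active (so the $(\,\cdot\,)_+$ truncation does not collapse the denominator to a constant), and that the lower bound of \parencite{LanZhou2017_optimal} is being invoked only for $d$ large enough relative to $K(\eps) = \Theta(n\ln n)$. Once these are in place, the corollary follows by comparing $\cO(n)$ against $\Omega(n\ln n)$.
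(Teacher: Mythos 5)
Your proposal is correct and follows essentially the same route as the paper, which states Corollary \ref{cor:nlogn-to-log} as a direct substitution of $\kappa=n^\beta$, $\eps=1/n^\alpha$ into the bound of Corollary \ref{Cor:OptimalUpperBound} (whose extra terms $\kappa_Q\ln(1/\eps)+n+\kappa_Q$ are dominated, as you implicitly use) and into the Lan--Zhou lower bound for span-condition algorithms with $d$ large relative to $K(\eps)$. The only slip is cosmetic: the matching upper bound for span-condition methods such as SAGA is $\cO\p{\p{n+\kappa}\ln\p{1/\eps}}$, not $\cO\p{\p{n+\sqrt{n\kappa}}\ln\p{1/\eps}}$, though both give $\cO\p{n\ln n}$ here, so the $\Theta(n\ln n)$ conclusion is unaffected.
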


For large-scale problems, this $\ln\p n$ factor can be
rather large: For instance in the KDD Cup 2012 dataset ($n=149,639,105$ and
$\ln\p n\approx18$), Criteo's Terabyte Click Logs ($n=4,195,197,692$
and $\ln\p n\approx22$), etc. Non-public internal company datasets
can be far larger, with $n$ potentially larger than $10^{15}$. Hence
for large-scale problems in this setting, SVRG, SARAH, and future algorithms designed for the big-data regime can be expected to have much better performance than algorithms following the span condition.

We also analyze Prox-SVRG in the case where  $f_{i}$ are smooth and potentially nonconvex, but the sum $F$ is strongly convex. We build on the work of \parencite{Allen-Zhu2018_katyusha}, which proves state-of-the-art complexity bounds for this setting, and show that we can attain a similar logarithmic speedup without modification. Lower bounds for this context are lacking, so it is unclear if this result can be further improved.

\section{Optimal Convex SVRG} \label{sec:OptimalSVRG}

In this section, we show that the Prox-SVRG algorithm proposed in \parencite{XiaoZhang2014_proximal} for problem \eqref{eq:Average-of-fi} can be sped up by a factor of $\Omega(1+(\ln(n/\kappa))_+)$ when $\kappa=\cO(n)$. A similar speedup is clearly possible for vanilla SVRG and SARAH, which have similar rate expressions. We then refine the lower bound analysis of \parencite{ArjevaniShamir2016_dimensionfreea} to show that the complexity is optimal\footnote{I.e. the complexity cannot be improved among a very broad class of finite-sum algorithms.} when $\kappa=\cO(n)$. Katyusha is optimal in the other scenario when $n=\cO(\kappa)$ by \parencite{ArjevaniShamir2016_iterationa}.

\begin{asmp} \label{assumption 1}
$f_i$ is $L_i-$Lipschitz differentiable for $i=1,2,...,n$. That is, 
    \[
    \|\nabla f_i(x)-\nabla f_i(y)\|\leq L_i\|x-y\|\quad \text{for all} \,\, x,y\in\mathbb{R}^d.
    \]
    $f$ is $L-$Lipschitz differentiable.     $F$ is $\mu-$strongly convex. That is, 
    \[
    F(y)\geq F(x)+\langle \tilde{\nabla}F(x), y-x\rangle+\frac{\mu}{2}\|y-x\|^2 \quad \text{for all}\,\,x,y\in\mathbb{R}^d\,\,\text{and}\,\, \tilde{\nabla}F(x)\in\partial F(x).
    \]

\end{asmp}

\begin{asmp}\label{assumption 2}
\hfill
 $f_i$ is convex for $i=1,2,...,n$; and
     $\psi$ is proper, closed, and convex.

\end{asmp}

\begin{algorithm}[H]    \caption{$\text{Prox-SVRG}(F, x^0, \eta,m)$}\label{euclid}
    \textbf{Input:} $F(x)=\psi(x)+\frac{1}{n}\sum_{i=1}^{n} f_i(x)$, initial vector $x^0$, step size $\eta>0$, number of epochs $K$, probability distribution $P=\cp{p_1,\ldots,p_n}$\\
    \textbf{Output:} vector $x^K$
    \begin{algorithmic}[1]
        \State{$M^k\sim\text{Geom}(\frac{1}{m})$;}
        \For{$k\leftarrow 0,...,K-1$}{}
        \State{$w_0\leftarrow x^k$; $\mu\leftarrow \nabla f(x^k)$;}
        \For{$t \leftarrow 0,...,M^k$}{}
        \State{pick $i_t\in\{1,2,...,n\}\sim P$ randomly;}
        \State{$\tilde{\nabla}_t=\mu+\big(\nabla f_{i_t}(w_t)-\nabla f_{i_t}(w_0)\big)/(np_{i_t});$}
        \State{$w_{t+1}=\argmin_{y\in\mathbb{R}^d}\{\psi(y)+\frac{1}{2\eta}\|y-w_t\|^2+\langle \tilde{\nabla}_t, y\rangle\};$}
        \EndFor
        \State{$x^{k+1}\leftarrow w_{M+1};$}
        \EndFor
    \end{algorithmic}
    \label{alg_1}
\end{algorithm}

We make Assumption \ref{assumption 1} throughout the paper, and Assumption \ref{assumption 2} in this section. Recall the Prox-SVRG algorithm of \parencite{XiaoZhang2014_proximal}, which we reproduce in Algorithm \ref{alg_1}. The algorithm is organized into a series of $K$ \textbf{epochs} of size $M^k$, where $M^k$ is a geometric random variable with success probability $1/m$. Hence epochs have an expected length of $m$. At the start of each epoch, a snapshot $\mu = \nabla f(x^k)$ of the gradient is taken. Then for $M^k$ steps, a random component gradient $\nabla_{i_t}f(w_t)$ is calculated, for an IID random variable $i_t$ with fixed distribution $P$ given by $\PP[i_t=i]=p_i$. This component gradient is used to calculate an unbiased estimate $\tilde{\nabla}_t$ of the true gradient $\nabla f(w_t)$. Each time, this estimate is then used to perform a proximal-gradient-like step with step size $\eta$. At the end of these $M^k$ steps, a new epoch of size $M^{k+1}$ is started, and the process continues.

We first recall a modified Theorem 1 from \parencite{XiaoZhang2014_proximal}. The difference is that in \parencite{XiaoZhang2014_proximal}, the authors used a epoch length of $m$, whereas we use a random epoch length $M^k$ with expectation $\EE M^k = m$. The proof and theorem statement only require only trivial modifications to account for this. This modification is only to unify the different version of SVRG in \parencite{XiaoZhang2014_proximal} and \parencite{Allen-Zhu2018_katyusha}, and makes no difference to the result.

It becomes useful to define the \textbf{effective Lipschitz constant} $L_Q = \text{max}_i L_i/(p_in)$, and the \textbf{effective condition number} $\kappa_Q=L_Q/\mu$ for this algorithm. These reduce to the standard Lipschitz constant $L$, and the standard condition number $\kappa$ in the standard uniform scenario where $L_i=L, \forall i$, and $P$ is uniform.

\begin{thm}[Complexity of Prox-SVRG] \label{upper complexity}
Let Assumptions \ref{assumption 1} and \ref{assumption 2} hold. Then Prox-SVRG defined in Algorithm \ref{alg_1} satisfies:
\begin{align}
    \label{linear convergence}
    \mathbb{E}[F(x^{k})-F(x^*)]&\leq \rho^{k}[F(x^0)-F(x^*)]\\
\text{ for } \rho&=\frac{1+\mu\eta\p{1+4mL_Q\eta}}{\mu \eta m\p{1-4L_Q\eta}}\label{linear rate}
\end{align}

\end{thm}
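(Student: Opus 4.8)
The plan is to follow the analysis of Xiao–Zhang for Prox-SVRG, adapting it to the random (geometrically distributed) epoch length $M^k$. Since this is a restatement of their Theorem 1 with only the epoch length changed, the essential work is to show that the per-epoch contraction argument still goes through when $M^k \sim \mathrm{Geom}(1/m)$ and to verify that $m$ plays exactly the role that a deterministic epoch length would. Concretely, I would first analyze a single epoch: fix $x^k$, set $w_0 = x^k$, and study the inner loop $w_0, w_1, \dots, w_{M^k+1}$.

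First I would establish the standard one-step inequality for the proximal-gradient-like update in line 7. Using the optimality condition for $w_{t+1}$, convexity of $\psi$, and $L$-smoothness of $f$, one obtains a bound of the form $F(w_{t+1}) \le F(w^*) + \frac{1}{2\eta}(\|w_t - w^*\|^2 - \|w_{t+1} - w^*\|^2) + \langle \tilde\nabla_t - \nabla f(w_t), w^* - w_{t+1}\rangle + (\text{terms from smoothness})$, where the inner product term has zero conditional expectation because $\tilde\nabla_t$ is an unbiased estimator of $\nabla f(w_t)$ (this is where the weighting $1/(np_{i_t})$ matters). The cross term is handled by the usual trick of splitting $w^* - w_{t+1} = (w^* - w_t) + (w_t - w_{t+1})$: the $(w^* - w_t)$ part vanishes in expectation, and the $(w_t - w_{t+1})$ part is absorbed using Young's inequality against the proximal term. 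Second, I would bound the variance $\EE\|\tilde\nabla_t - \nabla f(w_t)\|^2$ in terms of $L_Q$ and suboptimality gaps at $w_t$ and $w_0 = x^k$, using the co-coercivity of each convex $L_i$-smooth $f_i$; this is precisely where the effective Lipschitz constant $L_Q = \max_i L_i/(p_i n)$ enters, and where Assumption \ref{assumption 2} (convexity of each $f_i$) is used.

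Third — and this is the step I expect to require the most care — I would sum the one-step inequalities over $t = 0, \dots, M^k$ and take expectation over the random epoch length. With a deterministic length $m$, the telescoping of the $\|w_t - w^*\|^2$ terms and the averaging of the suboptimality gaps is immediate; with $M^k \sim \mathrm{Geom}(1/m)$ one must either (i) condition on $M^k$, sum, and then use $\EE[M^k] = m$ together with the memorylessness of the geometric distribution to identify the "average iterate" correctly, or (ii) observe that the geometric stopping makes the averaged iterate $x^{k+1} = w_{M^k+1}$ itself the natural quantity, so that $\EE[\text{gap at } x^{k+1}]$ equals a $1/m$-weighted sum $\sum_{t\ge 0}(1-1/m)^t \cdot (\text{gap at } w_{t+1})$, which telescopes against the $\|w_t - w^*\|^2$ terms. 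Either way the bookkeeping must be done so that the coefficient of $[F(x^k) - F(x^*)]$ on the right comes out to be exactly $\rho$ in \eqref{linear rate}. The main obstacle is thus not any single estimate but making sure the randomness of $M^k$ is handled consistently with the $x^{k+1} \leftarrow w_{M+1}$ assignment so that no spurious factors appear.

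Finally, once the single-epoch bound $\EE[F(x^{k+1}) - F(x^*)] \le \rho\, \EE[F(x^k) - F(x^*)]$ is in hand (with $\rho$ as stated, valid when $4L_Q\eta < 1$ so the denominator is positive), the claimed rate \eqref{linear convergence} follows by iterating over $k = 0, \dots, K-1$ and using the tower property, since the epochs are conditionally independent given their starting points. I would close by noting, as the authors do, that the only departure from Xiao–Zhang is the replacement of $m$ by $\EE[M^k] = m$, so all constants match their Theorem 1 and the subsequent optimization of $\eta$ and $m$ (to be carried out in the corollary) is unaffected.
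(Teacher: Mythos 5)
Your proposal is correct and follows essentially the same route as the paper, which simply invokes the analysis of Xiao--Zhang (Theorem~3.1 of \parencite{XiaoZhang2014_proximal}) and notes that only a trivial modification is needed to replace the deterministic epoch length by the geometric one with $\EE M^k = m$. Your identification of the geometric-stopping bookkeeping (the output $x^{k+1}=w_{M^k+1}$ playing the role of the averaged iterate) as the only substantive change is exactly the ``slight modification'' the paper has in mind.
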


In previous work, the optimal parameters were not really explored in much detail. In the original paper \parencite{JohnsonZhang2013_accelerating}, the author suggest $\eta=0.1/L$, which results in linear convergence rate $1/4\leq \rho \leq 1/2$ for $m\geq50\kappa$. In \parencite{XiaoZhang2014_proximal}, authors also suggest $\eta=0.1/L$ for $m=100\kappa$, which yields $\rho\approx 5/6$. However, they observe that $\eta=0.01/L$ works nearly as well. In \parencite{NguyenLiuScheinbergTakac2017_sarah}, authors obtain a similar rate expression for SARAH and suggest $\eta=0.5/L$ and $m=4.5\kappa$ which yields $\rho\approx 7/9$. In the following corollary, we propose a choice of $\eta$ and $m$ that leads to an optimal complexity to within a constant factor for $\kappa=\cO (n)$. This result helps explain why the optimal step size observed in prior work appears to be much smaller than the ``standard'' gradient descent step of $1/L$.

\begin{cor} \label{Cor:OptimalUpperBound}
Let the conditions of Theorem \ref{upper complexity} hold, and let $m=n+121 \kappa_Q$, and  $\eta=\kappa_Q^{\frac{1}{2}}m^{-\frac{1}{2}}/(2L_Q)$. The Prox-SVRG in Algorithm \ref{alg_1} has convergence rate $\rho \leq\sqrt{\frac{100}{121+(n/\kappa_Q)}}$, and hence it needs:
%
\begin{align}
K(\epsilon)=\cO\p{\p{\frac{n}{1+(\ln(\frac{n}{\kappa_Q}))_+}+\kappa_Q}\ln{\frac{1}{\epsilon}}+n+\kappa_Q}
\label{upper complexity of Prox-SVRG}
\end{align}
iterations in expectation to obtain a point $x^{K(\epsilon)}$ such that $\EE\sp{f\p{x^{K(\epsilon)}}-f\p{x^*}}<\epsilon$.
\end{cor}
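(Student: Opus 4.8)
The plan is to plug the prescribed choices $m = n + 121\kappa_Q$ and $\eta = \kappa_Q^{1/2} m^{-1/2}/(2L_Q)$ into the rate formula \eqref{linear rate} from Theorem \ref{upper complexity}, simplify to obtain the claimed bound $\rho \leq \sqrt{100/(121 + n/\kappa_Q)}$, and then convert the linear rate into an iteration-complexity estimate by taking logarithms.

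First I would substitute $\eta$ into the three places it appears in $\rho = \frac{1 + \mu\eta(1 + 4mL_Q\eta)}{\mu\eta m(1 - 4L_Q\eta)}$. Note $L_Q\eta = \tfrac12 (\kappa_Q/m)^{1/2}$, so $4L_Q\eta = 2(\kappa_Q/m)^{1/2}$ and $4mL_Q\eta = 2(m\kappa_Q)^{1/2}$; also $\mu\eta = \tfrac12 (\kappa_Q/m)^{1/2}/L_Q = \tfrac12 (m\kappa_Q)^{-1/2}$ after using $\kappa_Q = L_Q/\mu$. Since $m = n + 121\kappa_Q \geq 121\kappa_Q$, we have $(\kappa_Q/m)^{1/2} \leq 1/11$, hence $4L_Q\eta \leq 2/11$ and $1 - 4L_Q\eta \geq 9/11$. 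Plugging in, the numerator becomes $1 + \tfrac12(m\kappa_Q)^{-1/2} + 1 = 2 + \tfrac12(m\kappa_Q)^{-1/2}$ and the denominator is $\tfrac12 (m\kappa_Q)^{-1/2} m (1 - 4L_Q\eta) = \tfrac12 (m/\kappa_Q)^{1/2}(1-4L_Q\eta)$. A short computation bounds this ratio; the factor-of-100-over-121 shape should emerge from the crude bounds $2 + \tfrac12(m\kappa_Q)^{-1/2} \leq$ (small constant) and $1 - 4L_Q\eta \geq 9/11$, together with $m/\kappa_Q = 121 + n/\kappa_Q$, giving $\rho^2 \leq \mathrm{const}/(121 + n/\kappa_Q)$ with the constant working out to $100$ after keeping track of the numerics. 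This is the step I'd expect to be the main obstacle — not conceptually hard, but one must choose the intermediate bounds carefully so the constants land exactly on $100/121$ rather than something slightly worse; in particular one has to verify $m\kappa_Q$ is large enough that the $\tfrac12(m\kappa_Q)^{-1/2}$ term in the numerator is negligible enough.

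Next, from $\mathbb{E}[F(x^k) - F(x^*)] \leq \rho^k [F(x^0) - F(x^*)]$ we need $\rho^{K} \leq \epsilon$, i.e. $K \geq \ln(1/\epsilon)/\ln(1/\rho)$. Using $\ln(1/\rho) = \tfrac12 \ln\bigl((121 + n/\kappa_Q)/100\bigr)$, I would split into two regimes. When $n/\kappa_Q$ is bounded (say $n = \cO(\kappa_Q)$), $\ln(1/\rho)$ is bounded below by a positive constant, so $K = \cO(\ln(1/\epsilon))$ epochs suffice, and since each epoch costs $\cO(m) = \cO(n + \kappa_Q)$ gradient evaluations in expectation, the total is $\cO((n+\kappa_Q)\ln(1/\epsilon))$, consistent with \eqref{upper complexity of Prox-SVRG} since then $1 + (\ln(n/\kappa_Q))_+ = \cO(1)$. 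When $n/\kappa_Q \to \infty$, use $\ln\bigl((121 + n/\kappa_Q)/100\bigr) = \Theta(\ln(n/\kappa_Q))$, so the number of epochs is $\cO\bigl(\ln(1/\epsilon)/\ln(n/\kappa_Q)\bigr)$ and, multiplying by the per-epoch cost $\cO(m) = \cO(n + \kappa_Q)$, the gradient complexity is $\cO\bigl(\tfrac{n+\kappa_Q}{1 + (\ln(n/\kappa_Q))_+}\ln(1/\epsilon)\bigr)$. Finally I would absorb the additive $n + \kappa_Q$ term (the cost of at least one epoch, or equivalently the cost of computing the first full gradient) and rewrite $\tfrac{n+\kappa_Q}{1+(\ln(n/\kappa_Q))_+} = \cO\bigl(\tfrac{n}{1+(\ln(n/\kappa_Q))_+} + \kappa_Q\bigr)$, which gives exactly \eqref{upper complexity of Prox-SVRG}. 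The only subtlety in this last part is bookkeeping the expectation over the geometric epoch lengths $M^k$ and the stopping index, but since $\mathbb{E} M^k = m$ and the bound in Theorem \ref{upper complexity} is already stated in expectation over all randomness, a single application of the tower property suffices.
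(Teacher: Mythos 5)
Your proposal is correct and follows essentially the same route as the paper's proof in Appendix \ref{App:UpperComplexity}: substitute the prescribed $m$ and $\eta$ into \eqref{linear rate}, bound $\rho\leq\cO\p{\sqrt{\kappa_Q/m}}$, and multiply the epoch count $\big\lceil\ln\big(\tfrac{F(x^0)-F(x^*)}{\epsilon}\big)/\ln(1/\rho)\big\rceil$ by the per-epoch cost $n+m=\cO(n+\kappa_Q)$. The numerics you left as ``should emerge'' do close: with $1-4L_Q\eta\geq 9/11$ and $(m\kappa_Q)^{-1/2}\leq 1/11$ your setup gives $\rho\leq 5\sqrt{\kappa_Q/m}\leq 10/\sqrt{121+n/\kappa_Q}=\sqrt{100/(121+n/\kappa_Q)}$ (the paper reaches $10\sqrt{\kappa_Q/m}$ via the cruder bound $1-4L_Q\eta\geq 1/2$).
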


This result is proven in Appendix \ref{App:UpperComplexity}. The $n+\kappa_Q$ term is needed because we assume that at least one epoch is completed. For $n=\cO(\kappa_Q)$, we have a similar convergence rate ($\rho\approx\frac{10}{11}$) and complexity to algorithms that follow the span assumption. For $n\gg\kappa_Q$, we have a convergence rate $\rho\to 0$, and complexity $\cO\p{\frac{n}{1+(\ln(n/\kappa))}\ln(1/\epsilon}$, which can can be much better than $n\ln(1/\epsilon)$. See also Corollary \ref{cor:nlogn-to-log}.

\begin{rem}
In Theorem \ref{upper complexity} and Corollary \ref{Cor:OptimalUpperBound}, the optimal choice of the probability distribution $P=\{p_1,p_2,...,p_n\}$ on $\{1,2,...,n\}$ is $p_i=\frac{L_i}{\sum_{i=1}^n L_j}$ for $i=1,2,...,n$, and $L_Q=\frac{\sum_{i=1}^n L_i}{n}$.
\end{rem}

\subsection{Optimality} \label{sec:Optimality}
The major difference between SAGA, SAG, Miso/Finito, and SDCA without duality, and SVRG and SARAH, is that the former satisfy
what we call the \textbf{span condition} \eqref{eq:SpanCondition}. SVRG, and SARAH, do not, since they also involve full-gradient steps. We refer to SVRG, and SARAH as \textbf{hybrid methods}, since they use full-gradient and partial gradient information to calculate their iterations. We assume for simplicity that $L_i=L$, for all $i$, and that $\psi = 0$. We now present a rewording of Corollary 3 from \parencite{LanZhou2017_optimal}.

\begin{cor}
For every $\eps$ and randomized algorithm on \eqref{eq:Average-of-fi} that follows the span assumption, there are a dimension $d$, and  $L$-smooth, $\mu$-strongly convex functions $f_i$ on $\RR^d$ such that the algorithm takes at least $\Omega\p{\p{n+\sqrt{\kappa n}}\ln\p{1/\epsilon}}$ steps to reach sub-optimality $\EE f\p{x^k}-f\p{x^*}<\eps$.
\end{cor}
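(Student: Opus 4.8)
The plan is to establish this the way all first-order oracle lower bounds for finite sums are established: exhibit a single worst-case family of finite-sum quadratics on $\RR^d$ (with $d$ allowed to grow with $K(\epsilon)$) that is simultaneously componentwise $L$-smooth and $\mu$-strongly convex in the sum, and argue that the span condition \eqref{eq:SpanCondition} traps every algorithm in the class inside a slowly growing subspace, so that its suboptimality is bounded below by the tail of a geometric sequence. Since the statement is explicitly a rewording of Corollary~3 of \parencite{LanZhou2017_optimal}, the substance of the proof is to recall that construction in our notation and check its properties; I would present it with the skeleton reproduced and defer the detailed inequalities to that reference. Throughout I use the simplifying assumptions already in force here, $L_i=L$ for all $i$ and $\psi=0$.

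The construction: start from the classical tridiagonal ``chain'' quadratic used in Nesterov's lower bound, $x\mapsto \tfrac{c}{2}\,x^\top A x - b^\top x + \tfrac{\mu}{2}\|x\|^2$ with $A$ the $(-1,2,-1)$ Laplacian, and distribute the off-diagonal couplings of $A$ among the $n$ components $f_i$ so that: (i) each $f_i$ touches only a band of coordinates and is $L$-smooth; (ii) $F=\tfrac1n\sum_i f_i$ realizes the full chain and is $\mu$-strongly convex with $L/\mu=\kappa$; and (iii) the coupling that allows a point supported on the first $r$ coordinates to reach coordinate $r{+}1$ lives in exactly one component, the ``currently relevant'' index. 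Property (iii) yields the key invariant, proved by induction on $k$: conditioned on the drawn indices $i_0,\dots,i_{k-1}$, the iterate $x^k$ lies in $\cp{e_1,\dots,e_{r_k}}$-span, where the reachable prefix length $r_k$ grows by at most one per iteration and only when the relevant index is drawn. Because the $i_t$ are IID, $r_k$ grows slowly in expectation, and since $x^*$ has geometrically decaying coordinates, any point supported on the first $r_k$ coordinates has $\EE[f(x^k)-f(x^*)]$ at least a constant times the geometric tail $\sum_{j>r_k}(x_j^*)^2$.

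It then remains to turn ``$r_k$ grows slowly'' into the two terms of the bound. The additive $\Omega(n)$ comes from the fact that the chain cannot even be started until the first relevant component is drawn, which takes $\Omega(n)$ IID draws in expectation (a geometric-waiting-time / union-bound argument over the first $cn$ iterations). The $\Omega(\sqrt{\kappa n}\,\ln(1/\epsilon))$ term is where the delicate part lies: the chain length, the band widths of the $f_i$, and the scaling constant $c$ must be chosen so that componentwise $L$-smoothness, sum $\mu$-strong convexity with the prescribed $\kappa$, and the right balance between ``queries needed to advance the reachable prefix'' and ``geometric decay rate per advance'' all hold at once, so that driving the geometric tail below $\epsilon$ forces $k=\Omega(n+\sqrt{\kappa n}\,\ln(1/\epsilon))$. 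The main obstacle is exactly this simultaneous balancing inside the construction — together with upgrading Nesterov's deterministic ``one new coordinate per gradient'' bookkeeping to the randomized and adaptive setting of the span condition, where one must control the reachable dimension in expectation uniformly over all (possibly randomized) algorithmic choices. Given that the excerpt defers to \parencite{LanZhou2017_optimal} for this, I would cite their Corollary~3 for the quantitative construction and only reproduce the invariant above for self-containedness.
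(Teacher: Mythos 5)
The paper does not actually prove this corollary: it is stated verbatim as a rewording of Corollary~3 of \parencite{LanZhou2017_optimal}, so at the level of what the paper does, your final move --- citing that result for the quantitative construction --- is exactly what the authors do, and your proposal is acceptable on that basis. The difference worth flagging is that the construction you sketch is not the one behind the cited result. You distribute the links of a single Nesterov chain among the $n$ components so that only one ``currently relevant'' index can advance the reachable prefix; Lan and Zhou instead use a block-separable example in which $f_i$ acts on its own coordinate block and carries its own chain --- precisely the function \eqref{eq:Adversarial-f-rep-2} that this paper reproduces later for intuition. The block-separable choice buys a much simpler probabilistic argument: the depth reached in block $i$ is bounded by the binomial count $I_{k,i}$ of draws of index $i$, and the error lower bound is an exact moment-generating-function computation as in \eqref{eq:Binomial-RV}, with the additive $\Omega(n)$ term coming from blocks never sampled in the first $cn$ iterations. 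Your shared-chain invariant instead requires a sequential waiting-time argument (the front advances only when one specific index is drawn, so the depth after $k$ steps is controlled by a sum of geometric variables) and a careful scaling check: since each component must be $L$-smooth, the chain enters $F=\frac1n\sum_i f_i$ with weight $L/n$, so the per-coordinate decay rate is governed by $\kappa/n$ rather than $\kappa$; this is exactly the balancing needed to land at $\sqrt{n\kappa}\ln(1/\epsilon)$ instead of $n\sqrt{\kappa}\ln(1/\epsilon)$, and it must also be made robust to a non-uniform (but fixed) sampling distribution, e.g.\ by an averaging or permutation argument. Your sketch acknowledges these as the delicate points and defers them to the reference, which is fine, but if you intend the sketch to be self-contained you should either carry out that bookkeeping or simply adopt the block-separable construction, which both matches the cited proof and is the version this paper itself later exploits.
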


The above algorithms that satisfy the span condition all have known upper complexity bound $\cO\p{\p{n+\kappa}\ln\p{1/\eps}}$, and hence for $\kappa =\cO\p{n}$ we have a sharp convergence rate.

However, it turns out that the span assumption is an obstacle to fast convergence when $n\gg\kappa$ (for sufficiently high dimension). In the following theorem, we improve\footnote{Specifically, we improve the analysis of Theorem 2 from this paper.} the analysis of \parencite{ArjevaniShamir2016_dimensionfreea}, to show that the complexity of SVRG obtained in Corollary \ref{Cor:OptimalUpperBound} is optimal to within a constant factor without fundamentally different assumptions on the class of algorithms that are allowed. Clearly this also applies to SARAH. The theorem is actually far more general, and applies to a general class of algorithms called $p-$\textit{CLI oblivious} algorithms introduced in \parencite{ArjevaniShamir2016_dimensionfreea}. This class contains all VR algorithms mentioned in this paper. In Appendix \ref{App:LowerBound}, we give the definition of $p-$CLI oblivious algorithms, as well as the proof of a more general version of Theorem \ref{lower complexity of SVRG, SARAH}.

\begin{thm}[Lower complexity bound of Prox-SVRG and SARAH] \label{lower complexity of SVRG, SARAH}
For all $\mu,L$, there exist $L$-smooth, and $\mu$-strongly convex functions $f_i$ such that at least\footnote{We absorb some smaller low-accuracy terms (high $\epsilon$) as is common practice. Exact lower bound expressions appear in the proof.}
\begin{align}
K\p{\eps} &=\tilde{\Omega}\p{\p{\frac{n}{1+(\ln(\frac{n}{\kappa}))_+}+\sqrt{n\kappa}}\ln{\frac{1}{\epsilon}}+n}
\label{Lower Bound Oblivious PCLI}
\end{align}
iterations are needed for SVRG or SARAH to obtain expected suboptimality $\EE\sp{f\p{K\p{\eps}}-f\p{X^*}}<\eps$.

\end{thm}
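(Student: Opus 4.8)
The plan is to construct a hard instance of finite-sum minimization on which \emph{any} $p$-CLI oblivious algorithm—SVRG and SARAH included—must pay the claimed iteration cost, following the blueprint of \parencite{ArjevaniShamir2016_dimensionfreea} but sharpening the accounting so that the $n/(1+(\ln(n/\kappa))_+)$ term emerges rather than the weaker $n$. The natural candidate is a separable quadratic: take $f_i(x) = \tfrac{1}{2} x^\top A_i x - b_i^\top x$ where each $A_i$ is block-diagonal, the blocks are $2\times 2$ (or $1\times1$) ``shift'' matrices of the classical Nesterov/Nemirovski worst-function type, and the $b_i$ are chosen so that $F = \tfrac1n\sum f_i$ is $\mu$-strongly convex and each $f_i$ is $L$-smooth. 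The key point exploited in the $p$-CLI framework is that on quadratics an oblivious algorithm's iterate after $k$ steps is a fixed (algorithm-dependent, but instance-independent) polynomial/rational function of the data matrices applied to the initial point, so lower bounds reduce to a statement about how fast such functions can drive a residual to zero.

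The main steps, in order, would be: (1) Recall from Appendix \ref{App:LowerBound} the definition of $p$-CLI oblivious algorithms and the reduction (from \parencite{ArjevaniShamir2016_dimensionfreea}) that their behavior on the quadratic instance is governed by the spectrum of an iteration operator whose eigenvalues the algorithm cannot tune to the instance. (2) Split the analysis into the two regimes. For $n = \cO(\kappa)$, the $\sqrt{n\kappa}\ln(1/\eps)$ term is exactly the Arjevani–Shamir bound and is carried over verbatim. (3) For $\kappa = \cO(n)$—the interesting case—refine the counting argument: an oblivious method that queries one component per step only ``sees'' a given $f_i$ roughly once every $n$ steps, so after $T$ steps it has made about $T/n$ full sweeps; the worst-case residual after $r$ sweeps of such a method decays no faster than a fixed geometric factor per sweep \emph{unless} the per-sweep progress can be amplified, and the amplification available to a hybrid (full-gradient-using) method is at most a $(1+(\ln(n/\kappa))_+)$ factor. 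Quantifying this is the crux: one shows that to reach accuracy $\eps$ one needs $r = \Omega\!\big(\tfrac{\ln(1/\eps)}{1+(\ln(n/\kappa))_+}\big)$ sweeps, hence $T = \Omega\!\big(\tfrac{n}{1+(\ln(n/\kappa))_+}\ln(1/\eps)\big)$ component evaluations, and the additive $\tilde\Omega(n)$ accounts for completing at least one sweep. (4) Combine the two regime bounds and absorb the low-accuracy (large-$\eps$) correction terms into the $\tilde\Omega$, matching \eqref{Lower Bound Oblivious PCLI}.

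The hard part will be step (3): extracting the precise $1/(1+(\ln(n/\kappa))_+)$ dependence. In the span-assumption lower bound of \parencite{LanZhou2017_optimal} the information-theoretic bottleneck is that a component gradient is only informative about ``one new coordinate block'' per query, giving the clean $n\ln(1/\eps)$. A hybrid method circumvents exactly this by using the snapshot full gradient, which reveals all blocks at once; what remains to be shown is that the full gradient can be leveraged at most $\Theta(\ln(n/\kappa))$-fold before the strong-convexity floor ($\kappa$) limits further gain within a single epoch. Technically I expect this to come down to a Chebyshev-polynomial / minimax argument on the interval $[\mu, L]$ restricted to the ``low-frequency'' eigenvalues that the snapshot can handle, counting how many of the $n$ eigen-blocks of the worst instance can be pushed below $\eps$ per epoch as a function of the epoch length $m$ and the achievable contraction—essentially the mirror image of Corollary \ref{Cor:OptimalUpperBound}'s upper-bound calculation, run as an adversary. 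Verifying that the constructed $f_i$ simultaneously satisfy the $L$-smoothness of each component, the $\mu$-strong convexity of the sum, and the dimension requirement $d = \Omega(K(\eps))$ (so the adversary never ``runs out of room'') is routine but must be done carefully, as in the cited works.
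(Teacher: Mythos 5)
Your overall scaffolding (quadratic hard instance inside the oblivious $p$-CLI framework of \parencite{ArjevaniShamir2016_dimensionfreea}, regime split at $n\asymp\kappa$, carrying the $\sqrt{n\kappa}\ln(1/\eps)$ term over verbatim, and combining by picking the worse adversary) matches the paper. But the crux, your step (3), is not actually proved: you assert that the ``amplification'' a hybrid method gets from snapshot full gradients is at most a $1+(\ln(n/\kappa))_+$ factor per sweep and say you \emph{expect} this to follow from a Chebyshev/minimax argument over low-frequency eigenvalues, mirroring Corollary \ref{Cor:OptimalUpperBound}. That is precisely the quantitative claim the theorem needs, and it is left as a conjecture. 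Worse, the route you sketch would require reasoning about how an epoch-structured hybrid spends its full-gradient information, whereas the theorem must hold for \emph{every} oblivious $p$-CLI algorithm, which has no epoch structure you can exploit; proving a per-sweep amplification cap for that general class is essentially a new lower-bound argument, not a routine mirror of the upper bound.

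The paper's proof shows no new construction or minimax argument is needed. It takes the explicit residual bound already established inside the proof of Theorem 2 of \parencite{ArjevaniShamir2016_dimensionfreea}, valid for any oblivious $p$-CLI method (with each component evaluation counted as one oracle call, so a snapshot costs $n$ calls): for $n\geq\kappa/3$, after $K$ calls the suboptimality is at least $\frac{\mu}{4}\bigl(\frac{nR\mu}{L-\mu}\bigr)^2 q^{2K/n}$ with $q=\bigl(\sqrt{1+\frac{\kappa-1}{n}}-1\bigr)/\bigl(\sqrt{1+\frac{\kappa-1}{n}}+1\bigr)$. The only refinement is algebraic: when $n\gg\kappa$ one has $q\approx\frac{\kappa-1}{4n}$, so each oracle call buys $\Theta\bigl(\frac{1}{n}\ln\frac{n}{\kappa}\bigr)$ of log-progress, and solving for $K$ gives $K\geq\frac{n}{2\ln(1+\frac{6n}{\kappa-1})}\ln(\mathrm{const}/\eps)$, i.e.\ exactly the $\frac{n}{1+(\ln(n/\kappa))_+}\ln(1/\eps)$ term; the additive $\Omega(n)$ comes from the $\ln\frac{6n}{\kappa-1}$ piece of the constant inside the logarithm (plus the $\Omega(n)$ already in the Arjevani--Shamir bound), not from ``completing one sweep'' as you suggest. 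So the missing idea in your proposal is that the $\ln(n/\kappa)$ speedup limit is already encoded in the per-call contraction factor of the existing hard instance, and extracting it is a matter of careful bookkeeping when inverting the geometric decay, rather than a new spectral/counting argument about how full gradients can be leveraged within an epoch.
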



\subsection{SDCA} \label{sec:SDCA}
To complete the picture, in the following proposition, which we prove in Appendix \ref{App:lower complexity of SDCA}, we show that SDCA has a complexity lower bound of $\Omega(n\ln(1/\eps))$, and hence attains no logarithmic speedup. SDCA aims to solve the following problem:
\[
\min_{x\in \mathbb{R}^d} F(x)=\frac{1}{n}\sum_{i=1}^n f_i(x)= \frac{1}{n}\sum_{i=1}^n \big(\phi_i(x^Ty_i)+\frac{\lambda}{2}\|x\|^2\big),
\]
where each $y_i\in\mathbb{R}^d$, $\phi_i:\mathbb{R}\rightarrow \mathbb{R}$ is convex and smooth. It does so with coordinate minimization steps on the corresponding dual problem:
\[
\min_{\alpha\in \mathbb{R}^n} D(\alpha)\coloneqq \frac{1}{n}\sum_{i=1}^n \phi^*_i(-\alpha_i)+\frac{\lambda}{2}\|\frac{1}{\lambda n}\sum_{i=1}^n\alpha_i y_i\|^2,
\]
Here $\phi^*_i(u)\coloneqq \max_z\big(zu-\phi_i(z)\big)$ is the convex conjugate of $\phi_i$. Let $i_k$ be an IID sequence of uniform random variables on $\{1,...,n\}$. SDCA updates a dual point $\alpha^k$, while maintaining a corresponding primal vector $x^k$. SDCA can be written as:
\begin{align}\label{eq:SDCA}
    \alpha^{k+1}_i &= \begin{cases}
    \alpha^k_i, &\text{if}\,\,i\neq i_k,\\
    \argmin_{z} D(\alpha^k_1,...,\alpha^k_{i-1}, z, \alpha^k_{i+1},...,\alpha^k_{n}), &\text{if}\,\,i=i_k,
    \end{cases}\\ x^{k+1}&=\frac{1}{n\lambda}\sum_{i=1}^n\alpha^{k+1}_iy_i,
\end{align}
Since SDCA doesn't follow the span assumption, and the number of iterations $k$ is much greater than the dual problem dimension $n$, different arguments to the ones used in \parencite{LanZhou2017_optimal} must be used. Motivated by the analysis in \parencite{ArjevaniShalev-ShwartzShamir2016_lower}, which only proves a lower bound for dual suboptimality, we have the following lower complexity bound, which matches the upper complexity bound given in \parencite{Shalev-ShwartzZhang2013_stochastic} for $\kappa=\cO(n)$.

\begin{prop}[Lower complexity bound of SDCA]
\label{lower complexity of SDCA}
For all $\mu,L,n>2$, there exist $n$ functions $f_i$ that are $L-$smooth, and $\mu-$strongly convex such that 
\begin{align}
K(\epsilon)=\Omega\big(n\ln\frac{1}{\epsilon}\big)
\label{lower bound of SDCA}
\end{align}
iterations are needed for SDCA to obtain expected suboptimality $\mathbb{E}[F(K(\epsilon))-F(x^*)]\leq \epsilon$.
\end{prop}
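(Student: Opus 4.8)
The plan is to exhibit a single hard instance on which SDCA degenerates into sampling coordinates \emph{without} replacement, so that a coupon-collector argument forces $\Omega(n\ln(1/\epsilon))$ iterations. Fix the dimension $d=n$, take $y_i=e_i$ the standard basis vectors, let $\phi_i(z)=\tfrac12 (L-\mu)(z-b)^2$ with $b\neq 0$ a scalar to be chosen, and set the regularization parameter $\lambda=\mu$. Then $f_i(x)=\tfrac12(L-\mu)(x_i-b)^2+\tfrac{\mu}{2}\|x\|^2$ has Hessian $\mu I+(L-\mu)e_ie_i^\top$, so each $f_i$ is exactly $\mu$-strongly convex and $L$-smooth, and $F(x)=\tfrac{L-\mu}{2n}\sum_i(x_i-b)^2+\tfrac{\mu}{2}\|x\|^2$ is $\mu$-strongly convex; this is a legal instance for every $L>\mu$ and every $n>2$. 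The point of the construction is that $F$ — and hence the SDCA dual $D(\alpha)=\sum_i\bigl(\tfrac1n\phi_i^*(-\alpha_i)+\tfrac1{2\lambda n^2}\alpha_i^2\bigr)$ — is \emph{separable}, so that minimizing $D$ over one coordinate while holding the others fixed returns exactly the globally optimal value of that coordinate, independent of the rest.

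First I would record the elementary dynamics: with the standard initialization $\alpha^0=0$ (hence $x^0=0$), let $S_k=\{i_0,\dots,i_{k-1}\}$ be the random set of sampled indices after $k$ steps. By induction on $k$ and the separability just noted, $\alpha^k_i=\alpha^*_i$ for $i\in S_k$ and $\alpha^k_i=0$ for $i\notin S_k$; equivalently the maintained primal point $x^k=\tfrac1{n\lambda}\sum_i\alpha^k_i y_i$ has $x^k_i=x^*_i$ for $i\in S_k$ and $x^k_i=0$ for $i\notin S_k$, where $x^*_i=\tfrac{(L-\mu)b}{(L-\mu)+n\mu}=:v\neq0$ is the common optimal coordinate value. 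Next, $\mu$-strong convexity of $F$ together with $\nabla F(x^*)=0$ gives $F(x^k)-F(x^*)\ge \tfrac{\mu}{2}\|x^k-x^*\|^2=\tfrac{\mu v^2}{2}\,|S_k^c|$. Taking expectation over the IID uniform samples, $\EE|S_k^c|=\sum_{i=1}^n\PP[i\notin S_k]=n(1-1/n)^k$, so $\EE[F(x^k)-F(x^*)]\ge C(1-1/n)^k$ with $C=\tfrac{n\mu v^2}{2}$; since $v$ scales linearly with the free parameter $b$, we may take $C\ge1$.

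To finish, note $\EE[F(x^k)-F(x^*)]>\epsilon$ whenever $C(1-1/n)^k>\epsilon$, i.e. whenever $k<\ln(C/\epsilon)/\bigl(-\ln(1-1/n)\bigr)$; using $-\ln(1-1/n)\le \tfrac1{n-1}$ this holds for all $k<(n-1)\ln(C/\epsilon)$. Hence $K(\epsilon)\ge (n-1)\ln(C/\epsilon)\ge (n-1)\ln(1/\epsilon)=\Omega(n\ln(1/\epsilon))$, which is the claim (the $\Omega$, as usual, absorbs the requirement that $\epsilon$ lie below a fixed constant and the low-accuracy regime).

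The argument is short, and the main thing to get right — the reason this needs a little care beyond the dual-suboptimality bound of \parencite{ArjevaniShalev-ShwartzShamir2016_lower} — is that the statement concerns \emph{primal} suboptimality $\EE[F(x^k)-F(x^*)]$, whereas the ``coordinates are updated one at a time'' phenomenon lives in the dual iterate $\alpha^k$. Choosing a separable instance is exactly what lets one transfer the dual picture to a primal lower bound using only strong convexity of $F$ and the linear primal-recovery map, rather than a sharper duality-gap estimate. A secondary point is the normalization (tying $\lambda$ to $\mu$ and the curvature of $\phi_i$ to $L-\mu$) so the instance realizes the prescribed constants $\mu,L$ for every $n$, together with a check of the degenerate cases ($L=\mu$, small $n$) against the precise statement proved in Appendix \ref{App:lower complexity of SDCA}.
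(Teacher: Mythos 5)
Your proof is correct, but it takes a genuinely different route from the paper's. The paper builds a \emph{coupled} instance ($\phi_i(t)=\tfrac12 t^2$, $y_i$ the columns of $Y=c(n^2I+J)$, $\lambda=\mu$), writes the SDCA coordinate update in closed form, observes that the expected dual iterate evolves linearly, $\EE\alpha^{k+1}=T\,\EE\alpha^k$, initializes at the eigenvector $\alpha^0=(1,\dots,1)$ of $T$ whose eigenvalue satisfies $\theta\ge 1-2/n$, and then passes to the primal via Jensen, $\EE\|x^k\|^2\ge\|\,\EE x^k\|^2=\theta^{2k}\|x^0\|^2$ with $x^*=0$. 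You instead choose a \emph{separable} instance ($y_i=e_i$, shifted quadratic losses), so the dual decouples coordinatewise: each sampled dual coordinate jumps to its optimal value and never moves again, untouched coordinates stay at zero, and a coupon-collector count of unsampled indices gives $\EE[F(x^k)-F(x^*)]\ge \tfrac{\mu v^2}{2}\,n(1-1/n)^k$, which strong convexity converts into the $\Omega(n\ln(1/\epsilon))$ bound. Both arguments ultimately exhibit a per-iteration contraction no better than $1-\Theta(1/n)$, but they buy different things: your construction is more elementary and transparent (no eigen-analysis, no Jensen, the standard initialization $\alpha^0=0$ rather than a specially chosen eigenvector), and it makes the mechanism visibly parallel to the span-assumption ``untouched block'' arguments of Section 3, here transplanted to the dual; the paper's expected-iteration-operator technique (in the spirit of Arjevani--Shalev-Shwartz--Shamir) does not rely on separability of the dual and therefore extends to coupled adversarial instances, at the price of the special initialization and a slightly more opaque computation. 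Your checks on the constants are sound (each $f_i$ has Hessian $\mu I+(L-\mu)e_ie_i^\top$, so it is exactly $\mu$-strongly convex and $L$-smooth, and $b$ rescales the solution without affecting curvature); just note explicitly that $L>\mu$ is needed for a nontrivial instance, an assumption the paper's proof also makes implicitly since its scaling constant is proportional to $(L-\mu)^{1/2}$.
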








\section{Why are hybrid methods faster?}
In this section, we explain why SVRG and SARAH, which are
a hybrid between full-gradient and VR methods, are fundamentally faster
than other VR algorithms. We consider the performance of these algorithms on a variation of
the adversarial function example from \parencite{LanZhou2017_optimal,Nesterov2013_introductory}.
The key insight is that the span condition makes this adversarial
example hard to minimize, but that the full gradient steps of SVRG
and SARAH make it easy when $n\gg\kappa$. 

We conduct the analysis in $\ell^{2}$, for simplicity\footnote{This is the Hilbert space of sequence $(x_{i})_{i=1}^\infty$ with $\sum_{i=1}^\infty x_i^2<\infty$}, since the argument
readily applies to $\RR^{d}$. Consider the function introduced in \parencite{Nesterov2013_introductory}
that we introduce for the case $n=1$:
\begin{align*}
\phi\p x & =\frac{L-\sigma}{4}\p{\frac{1}{2}\dotp{x,Ax}-\dotp{e_{1},x}}\text{, for }A=\p{\begin{array}{cccc}
2 & -1\\
-1 & 2 & -1\\
 & -1 & 2 & \ddots\\
 &  & \ddots & \ddots
\end{array}}
\end{align*}
The function $\phi\p x+\frac{1}{2}\sigma\n x^{2}$ is $L$-smooth
and $\sigma$-strongly convex. Its minimizer $x^{*}$ is given by
$\p{q_{1},q_{1}^{2},q_{1}^{3},\ldots}$ for $q_{1}=\p{\kappa^{1/2}-1}/\p{\kappa^{1/2}+1}$.
We assume that $x^{0}=0$ with no loss in generality. Let $N\p x$
be position of the last nonzero in the vector. E.g. $N\p{0,2,3,0,4,0,0,0,\ldots}=5$.
$N\p x$ is a control on how close $x$ can be to the solution. If
$N\p x=N$, then clearly:
\begin{align*}
\n{x-x^{*}}^{2} & \geq\min_{y\text{ s.t. }N\p y=N}\n{y-x^{*}}^{2}=\n{\p{0,\ldots,0,q_{1}^{N+1},q_{1}^{N+2},\ldots}}^{2}=q_{1}^{2N+2}/\p{1-q_{1}^{2}}
\end{align*}
Because of the tridiagonal pattern of nonzeros in the Hessian $\nabla_{x}^{2}\p{\phi\p x+\frac{1}{2}\sigma\n x^{2}}\p y=\frac{L-\sigma}{4}A+\sigma I$,
the last nonzero $N\p{x^{k}}$ of $x^{k}$ can only increase by $1$
per iteration \emph{by any algorithm that satisfies that span condition}
(e.g. gradient descent, accelerated gradient descent, etc.). Hence
since we have $N\p{x^{0}}=0,$ we have $\n{x^{k}-x^{*}}^{2}/\n{x^{0}-x^{*}}^{2}\geq q_{1}^{2k}$.

For the case $n>1$, let the solution vector $x=\p{x_{1},\ldots,x_{n}}$
be split into $n$ coordinate blocks, and hence define: 
\begin{align}
f\p x & =\sum_{i=1}^n\big(\phi\p{x_{i}}+\frac{1}{2}\sigma\n x^{2}\big)\\ &=\sum_{i=1}^n\p{\frac{L-\sigma}{4}\p{\frac{1}{2}\dotp{x_{i},Ax_{i}}-\dotp{e_{1},x_{i}}}+\frac{1}{2}\p{\sigma n}\n{x_{i}}^{2}}\nonumber\\
 & =\sum_{i=1}^n\p{\frac{\p{L-\sigma+\sigma n}-\sigma n}{4}\p{\frac{1}{2}\dotp{x_{i},Ax_{i}}-\dotp{e_{1},x_{i}}}+\frac{1}{2}\p{\sigma n}\n{x_{i}}^{2}}.
 \label{eq:Adversarial-f-rep-2}
\end{align}
$f$ is clearly the sum of $n$ convex $L$-smooth functions $\phi\p{x_{i}}+\frac{1}{2}\sigma\n x^{2}$,
that are $\sigma$-strongly convex.  \eqref{eq:Adversarial-f-rep-2} shows it is $\sigma n$-strongly
convex and $L-\sigma+\sigma n$-smooth with respect to coordinate
$x_{i}$. Hence the minimizer is given by $x_{i}=\p{q_{n},q_{n}^{2},q_{n}^{3},\ldots}$
for $q_{n}=\p{\p{\frac{\kappa-1}{n}+1}^{1/2}-1}/\p{\p{\frac{\kappa-1}{n}+1}^{1/2}+1}$
for all $i$. Similar to before, $\p{N\p{x_{1}},\ldots,N\p{x_{n}}}$
controls how close $x$ can be to $x^{*}$:
\begin{align*}
\frac{\n{x-x^{*}}^{2}}{\n{x^{*}}^{2}} & =\frac{\sum_{i=1}^{n}\n{x_{i}-\p{q_{n},q_{n}^{2},\ldots}}^{2}}{nq_{n}^{2}/\p{1-q_{n}^{2}}}\geq\sum_{i=1}^{n}q_{n}^{2N\p{x_{i}}}/n
\end{align*}
Let $I_{K,i}$ be the number of times that $i_{k}=i$ for $k=0,1,\ldots,K-1$.
For algorithms that satisfy the span assumption, we have $N\p{x_{i}^{k}}\leq I_{k,i}$.
If we assume that $i_{k}$ is uniform, then $I_{K,i}$ is a binomial
random variable of probability $1/n$ and size $k$. Hence:
\begin{align}
\EE\n{x^{k}-x^{*}}^{2}/\n{x^{0}-x^{*}}^{2} & \geq\EE\sum_{i=1}^{n}q_{n}^{2N\p{x_{i}^{k}}}/n\geq\EE\sum_{i=1}^{n}q_{n}^{2I_{k,i}}/n\nonumber\\
 & =\EE q_{n}^{2I_{k,i}}=\p{1-n^{-1}\p{1-q_{n}^{2}}}^{k}\label{eq:Binomial-RV}\\
 & \geq\p{1-4n^{-1}/\p{\p{\frac{\kappa-1}{n}+1}^{1/2}+1}}^{k}\nonumber\\
 & \geq\p{1-2n^{-1}}^{k}\nonumber
\end{align}
for $n\geq\kappa$. the second equality in \eqref{eq:Binomial-RV} follows from the factor that $I_{i,k}$ is a binomial random variable. Hence after 1 epoch, $\EE\n{x^{k}-x^{*}}^{2}$
decreases by a factor of at most $\approx e^{2}$, whereas for SVRG it decreases
by at least a factor of $\sim\p{n/\kappa}^{1/2}$, which is $\gg e^{2}$ for
$n\gg\kappa$. To help understand why, consider trying the above analysis
on SVRG for 1 epoch of size $n$. Because of the full-gradient step,
we actually have $N\p{w_{i}^{n}}\leq1+I_{n,i}$, and hence:
\begin{align*}
\EE\n{w^{n}-x^{*}}^{2}/\n{x^{0}-x^{*}}^{2} & \geq\EE\sum_{i=1}^{n}q_{n}^{2\p{I_{n,1}+1}}\geq q_{n}^{2}\p{1-2n^{-1}}^{n}
 \approx\p{\frac{1}{4}\frac{\kappa-1}{n}}^{2}e^{-2}
\end{align*}
Hence attempting the above results in a much smaller lower bound.

What it comes down to is that when $n\gg\kappa$, we have $\EE\sum_{i=1}^{n}q_{n}^{2I_{i,k}}/n\gg\EE\sum_{i=1}^{n}q_{n}^{2\EE I_{i,k}}/n$.
The interpretation is that for this objective, the progress towards
a solution is limited by the component function $f_{i}$ that is
minimized the least. The full gradient step ensures that at least
some progress is made toward minimizing every $f_{i}$. For algorithms
that follow the span assumption, there will invariably be many indices
$i$ for which no gradient $\nabla f_{i}$ is calculated, and hence
$x_{i}^{k}$ can make no progress towards the minimum. This may be related to the observation that sampling without replacement can often speed up randomized algorithms. However, on the other hand, it
is well known that full gradient methods fail to achieve a good convergence
rate for other objectives with the same parameters $\mu,L,n$ (e.g. $f\p x=\frac{1}{n}\sum_{i=1}^n\phi\p x+\frac{1}{2}\mu\n x^{2}$). Hence we conclude that it is because SVRG combines both full-gradient and
VR elements that it is able to outperform both VR and full-gradient
algorithms.

\section{Nonconvex Prox-SVRG}
In this section, we show that when $f_i$ is merely assumed to be $L_i$ smooth and possibly nonconvex, there is also a logarithmic speedup. This is based on the analysis of Prox-SVRG found in \parencite{Allen-Zhu2018_katyusha}. The proof of Theorem \ref{nonvex upper complexity} can be found in Appendix \ref{App:Nonconvex-SVRG}.

\begin{thm} \label{nonvex upper complexity}
Under Assumption \ref{assumption 1}, let $x^*=\argmin_x F(x)$,  $\overline{L}=(\sum_{i=1}^n\frac{L_i^2}{n^2p_i})^{\frac{1}{2}}$,  $\kappa=\frac{L}{\mu}$, and $\eta=\frac{1}{2}\min\{\frac{1}{L}, (\frac{1}{\overline{L}^2m})^{\frac{1}{2}}\}$. Then the Prox-SVRG in Algorithm \ref{alg_1} satisfies:
\begin{align}
    \label{nonconvex linear convergence}
    \mathbb{E}[F({x}^{k})-F(x^*)]&\leq \cO(\rho^{k})[F(x^0)-F(x^*)],\\
    \text{ for }\rho&=\frac{1}{1+\frac{1}{2}m\eta\mu}\label{nonconvex linear rate}.
\end{align}
Hence for $m=\min\{n,2\}$, in order to obtain an $\epsilon$-optimal solution in terms of function value, the SVRG in Algorithm \ref{alg_1} needs at most 
\begin{align}
\label{nonconvex K}
K=\cO\big((\frac{n}{\ln{(1+\frac{n}{4\kappa})}}+\frac{n}{\ln(1+(\frac{n\mu^2}{4\overline{L}^2})^{1/2})}+\kappa+\sqrt{n}\frac{\Bar{L}}{\mu})\ln\frac{1}{\epsilon}\big)+2n
\end{align}
gradient evaluations in expectation.

\end{thm}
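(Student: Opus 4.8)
The plan is to adapt the nonconvex Prox-SVRG analysis of \parencite{Allen-Zhu2018_katyusha} to our setting with importance sampling and a geometrically distributed epoch length, and then unwind the resulting per-epoch contraction into a gradient-complexity bound. The heart of the argument is a single-epoch estimate of the form $\EE[F(x^{k+1})-F(x^*)]\le \rho\,\EE[F(x^k)-F(x^*)]$ up to an absolute constant, with $\rho=(1+\tfrac12 m\eta\mu)^{-1}$; the constant is what produces the $\cO(\rho^k)$ in \eqref{nonconvex linear convergence}. Once this per-epoch bound is in hand, iterating over $k$ epochs yields \eqref{nonconvex linear convergence}, and the complexity \eqref{nonconvex K} follows by bookkeeping the number of epochs and the cost per epoch.

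For the one-epoch estimate I would proceed in four steps. First, using $L$-smoothness of $f$ together with the optimality condition of the proximal step (plugging $y=w_t$ into the $\argmin$ defining $w_{t+1}$) and a Young's-inequality split of the cross term $\dotp{\nabla f(w_t)-\tilde\nabla_t,\,w_{t+1}-w_t}$, derive a descent inequality of the shape $F(w_{t+1})\le F(w_t)-\p{\tfrac1{2\eta}-\tfrac L2}\n{w_{t+1}-w_t}^2+\tfrac{\eta}{2}\n{\tilde\nabla_t-\nabla f(w_t)}^2$. Second, bound the estimator variance: since $\EE_{i_t}\tilde\nabla_t=\nabla f(w_t)$ and $\EE\n{X-\EE X}^2\le\EE\n{X}^2$, the $L_i$-Lipschitzness of each $\nabla f_i$ gives $\EE_{i_t}\n{\tilde\nabla_t-\nabla f(w_t)}^2\le\big(\sum_{i=1}^n\tfrac{L_i^2}{n^2 p_i}\big)\n{w_t-w_0}^2=\overline{L}^2\n{w_t-w_0}^2$; crucially this uses only smoothness, not convexity, of the $f_i$, which is exactly why $\overline{L}^2$ (rather than an $L_Q$-type quantity) appears. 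Third, control the accumulated term $\sum_t\n{w_t-w_0}^2$ by telescoping the per-step decreases $\n{w_{s+1}-w_s}^2$ against a non-uniform coefficient sequence as in \parencite{Allen-Zhu2018_katyusha}; the stated choice $\eta\le\tfrac12(\overline{L}^2 m)^{-1/2}$ (equivalently $\overline{L}^2 m\eta^2\le\tfrac14$) together with $\eta\le\tfrac1{2L}$ is precisely what makes the aggregated variance error dominated by the aggregated descent over an epoch of length $M^k$. Fourth, invoke $\mu$-strong convexity of $F$ in Polyak--Łojasiewicz form — so each $F(w_t)-F(x^*)$ is comparable to $\tfrac1{2\mu}$ times the squared gradient-mapping norm, which in turn controls $\n{w_{t+1}-w_t}^2/\eta^2$ — to convert the telescoped descent into the multiplicative factor $(1+\tfrac12 m\eta\mu)^{-1}$; taking expectation over $M^k\sim\mathrm{Geom}(1/m)$ reproduces the same $\rho$ up to a constant, since $\EE M^k=m$ and the relevant geometric series sums cleanly.

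For the complexity \eqref{nonconvex K}, substitute $m=\max\{n,2\}$ (so $m\asymp n$; I read the $\min$ in the statement as a typo for $\max$) into $\eta=\tfrac12\min\{1/L,(\overline{L}^2 m)^{-1/2}\}$ to get $\tfrac12 m\eta\mu=\min\{\tfrac{n}{4\kappa},\tfrac14(n\mu^2/\overline{L}^2)^{1/2}\}$, hence $\ln(1/\rho)=\min\{\ln(1+\tfrac{n}{4\kappa}),\ln(1+\tfrac14(n\mu^2/\overline{L}^2)^{1/2})\}$ by monotonicity of $\ln$. The number of epochs needed is $\lceil\ln(1/\epsilon)/\ln(1/\rho)\rceil$; since $1/\ln(1/\rho)=\max\{1/\ln(1+\tfrac{n}{4\kappa}),\,1/\ln(1+\tfrac14(n\mu^2/\overline{L}^2)^{1/2})\}\le 1/\ln(1+\tfrac{n}{4\kappa})+1/\ln(1+\tfrac14(n\mu^2/\overline{L}^2)^{1/2})$, multiplying by the $\cO(n)$ gradient evaluations per epoch (one full gradient plus $\EE M^k=m\asymp n$ component gradients) gives the first two terms of \eqref{nonconvex K} after multiplying by $n$; the $\kappa$ and $\sqrt n\,\overline{L}/\mu$ terms are absorbed using $\ln(1+x)\le x$ in the small-$x$ regime (where e.g.\ $\tfrac{n}{\ln(1+n/4\kappa)}\gtrsim\kappa$), and the unconditional $2n$ accounts for completing at least one epoch regardless of $\epsilon$.

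The main obstacle is the third step: because the $f_i$ need not be convex, the usual tight inequality $\n{\nabla f_i(x)-\nabla f_i(y)}^2\le 2L_i\p{f_i(x)-f_i(y)-\dotp{\nabla f_i(y),x-y}}$ is unavailable, so one must work with $\n{w_t-w_0}^2$ directly and defeat the naive factor-of-$m$ blow-up $\n{w_t-w_0}^2\le t\sum_{s<t}\n{w_{s+1}-w_s}^2$ via the carefully weighted telescoping of \parencite{Allen-Zhu2018_katyusha} together with the specific step size. Lining up the constants and the coefficient sequence so that the net per-epoch factor is exactly $(1+\tfrac12 m\eta\mu)^{-1}$, rather than something weaker, is the delicate part; the strong-convexity-to-contraction step, the handling of the random epoch length, and the complexity arithmetic are then routine.
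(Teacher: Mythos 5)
Your steps 1--3 essentially amount to re-deriving Lemmas 3.3 and 5.1 of \parencite{Allen-Zhu2018_katyusha}, which the paper simply imports as a black box, and your reading of $m=\min\{n,2\}$ as a typo for $\max\{n,2\}$ matches what the paper's own arithmetic requires ($K=(n+m)K_0$ with $m\asymp n$). The genuine gap is in your step 4 and in the way you propose to chain epochs. You claim a per-epoch contraction $\EE[F(x^{k+1})-F(x^*)]\le \rho\,\EE[F(x^k)-F(x^*)]$ ``up to an absolute constant'' with $\rho=(1+\tfrac12 m\eta\mu)^{-1}$, and then iterate. But if the per-epoch bound carries a multiplicative constant $C>1$, iterating gives $(C\rho)^k$, not $\cO(\rho^k)$, and for the parameter ranges of interest ($\tau=\tfrac12 m\eta\mu$ possibly large) this destroys the claimed rate \eqref{nonconvex linear convergence}. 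The paper avoids this entirely: the imported one-epoch inequality is not a contraction of the objective gap but a mixed estimate valid for every comparison point $u$, namely $\EE[F(x^{j+1})-F(u)]\le\EE\bigl[\tfrac1{4m\eta}\|x^{j+1}-x^j\|^2+\tfrac1{2m\eta}\|x^j-u\|^2-\tfrac1{2m\eta}\|x^{j+1}-u\|^2-\tfrac\mu4\|x^{j+1}-u\|^2\bigr]$; it is specialized to $u=x^*$ and $u=x^j$, the two inequalities are combined with weight $(1+2\tau)$, multiplied by $(1+\tau)^j$, and telescoped \emph{across} epochs, giving $F(x^k)(1+\tau)^k\le F(x^0)\bigl(2+\tfrac1{2\tau}\bigr)$ so that the constant appears exactly once and only inside the logarithm of the epoch count.

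Your proposed route to the contraction --- a Polyak--\L ojasiewicz argument converting per-step descent into a per-iteration multiplicative factor --- also runs into a concrete obstacle in this setting: the variance bound you correctly state, $\EE\|\tilde\nabla_t-\nabla f(w_t)\|^2\le\overline{L}^2\|w_t-w_0\|^2$, is anchored at the epoch snapshot $w_0$, not at the current iterate, so the accumulated variance over an epoch is not controlled by the current gradient-mapping norm and cannot be absorbed by a per-iteration PL contraction. This is precisely why the analysis of \parencite{Allen-Zhu2018_katyusha} (and the paper) works at the epoch level with an arbitrary comparison point $u$ and a cross-epoch Lyapunov weighting, rather than proving any per-iteration or per-epoch contraction of $F-F^*$. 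To fix your argument you would either need to reproduce that $u$-dependent epoch inequality and the $(1+\tau)^j$-weighted telescoping (at which point you recover the paper's proof), or supply a genuinely different mechanism for handling the anchored variance; as written, step 4 does not go through. The remaining complexity bookkeeping in your last paragraph (splitting $1/\ln(1/\rho)$ into the two logarithmic terms, absorbing $\kappa$ and $\sqrt n\,\overline{L}/\mu$, and the additive $2n$) matches the paper and is fine once the rate \eqref{nonconvex linear convergence} is actually established.
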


The complexity of nonconvex SVRG using the original analysis of \parencite{Allen-Zhu2017_katyusha} would have been
\begin{align}
K=\cO\big((n+\kappa+\sqrt{n}\frac{\Bar{L}}{\mu})\ln\frac{1}{\epsilon}\big) 
\end{align}
Hence we have obtained a similar logarithmic speedup as we obtained in Corollary \ref{Cor:OptimalUpperBound}.

\begin{rem}
In Theorem \ref{nonvex upper complexity}, the optimal choice of the probability distribution $P=\{p_1,p_2,...,p_n\}$ on $\{1,2,...,n\}$ is $p_i=\frac{L_i^2}{\sum_{i=1}^n L_j^2}$ for $i=1,2,...,n$, and $\overline{L}=(\frac{\sum_{i=1}^n L_i^2}{n})^{\frac{1}{2}}$.
\end{rem}


\pdfbookmark[1]{References}{References}
\printbibliography

\newpage

\appendix

\section{Upper Complexity Bound for Convex SVRG} \label{App:UpperComplexity}

\begin{proof}[Proof of Theorem \ref{upper complexity} and Corollary \ref{Cor:OptimalUpperBound}]
\eqref{linear convergence} and \eqref{linear rate} follows directly from the analysis of \parencite[Thm 3.1]{XiaoZhang2014_proximal} with slight modification.

For the linear rate $\rho$ in \eqref{linear rate}, we have
\begin{align*}
    \rho&\overset{(\mathrm{a})}{\leq} 2(\frac{1}{\mu\eta m}+4L_Q\eta+\frac{1}{m})\\
    &\overset{(\mathrm{b})}{=} 2(\frac{1}{\mu\eta m}+2\kappa_Q^{\frac{1}{2}}m^{-\frac{1}{2}})+\frac{2}{m}\\
    &\overset{(\mathrm{c})}{=} 2\Big(\frac{1}{\mu m}2L_Q\kappa_Q^{-\frac{1}{2}}m^{\frac{1}{2}}+2\kappa_Q^{\frac{1}{2}}m^{-\frac{1}{2}}\Big)+\frac{2}{m}\\
    &=8\kappa_Q^{\frac{1}{2}}m^{-\frac{1}{2}}+\frac{2}{m}\\
    &\overset{(\mathrm{d})}{\leq}8\kappa_Q^{\frac{1}{2}}m^{-\frac{1}{2}}+2\kappa_Q^{\frac{1}{2}}m^{-\frac{1}{2}}\\
    &=10\kappa_Q^{\frac{1}{2}}m^{-\frac{1}{2}},
\end{align*}
where (a) is by $\eta=\frac{\kappa_Q^{\frac{1}{2}}m^{-\frac{1}{2}}}{2L_Q}\leq\frac{1}{22L_Q}\leq \frac{1}{8L_Q}$, (b) is by $\eta = \frac{\kappa_Q^{\frac{1}{2}}m^{-\frac{1}{2}}}{2L_Q}$, (c) is by $\frac{1}{\eta}= 2L_Qm^{\frac{1}{2}}\kappa_Q^{-\frac{1}{2}}$, and (d) follows from $\kappa_Q^{\frac{1}{2}}m^{\frac{1}{2}}\geq 1$.

Therefore, the epoch complexity (i.e. the number of epochs required to reduce the suboptimality to below $\epsilon$) is
\begin{align*}
    K_0&=\lceil\frac{1}{\ln(\frac{1}{10}m^{\frac{1}{2}}\kappa_Q^{-\frac{1}{2}})}\ln\frac{F(x^0)-F(x^*)}{\epsilon}\rceil\\
    &\leq \frac{1}{\ln(\frac{1}{10}m^{\frac{1}{2}}\kappa_Q^{-\frac{1}{2}})}\ln\frac{F(x^0)-F(x^*)}{\epsilon}+1\\
    &=\frac{2}{\ln(1.21+\frac{1}{100}\frac{n}{\kappa_Q})}\ln\frac{F(x^0)-F(x^*)}{\epsilon}+1\\
    &=\cO\big(\frac{1}{\ln(1.21+
    \frac{n}{100\kappa_Q})}\ln\frac{1}{\epsilon}\big)+1
\end{align*}
where $\lceil \cdot \rceil$ is the ceiling function, and the second equality is due to $m=n+121\kappa_Q$.

Hence, the gradient complexity is 
\begin{align*}
    K&=(n+m)K_0\\
    &\leq \cO\big(\frac{n+\kappa_Q}{\ln
    (1.21+\frac{n}{100\kappa_Q})}\ln\frac{1}{\epsilon}\big)+n+121\kappa_Q,
\end{align*}
which is equivalent to \eqref{upper complexity of Prox-SVRG}.
\end{proof}

\section{Lower Complexity Bound for Convex SVRG} \label{App:LowerBound}

\begin{defn}\parencite[Def. 2]{ArjevaniShamir2016_dimensionfreea}
An optimization algorithm is called a Canonical Linear Iterative (CLI) optimization
algorithm, if given a function $F$ and initialization
points $\{w^{0}_i\}_{i\in J}$, where $J$ is some index set, it operates by iteratively generating points such
that for any $i\in J$,
\[
w^{k+1}_i = \sum_{j\in J} O_F(w^k_j; \theta^k_{ij}),\quad k=0,1,...
\]
holds, where $\theta^k_{ij} $ are parameters chosen, stochastically or deterministically, by the algorithm, possibly depending on the side-information. $O_F$ is an oracle parameterized by $\theta^k_{ij}$. If the parameters do not depend on previously acquired oracle answers, we say that the given algorithm is oblivious. Lastly, algorithms with $|J|\leq p$, for some $p\in \mathbb{N}$, are denoted by p-CLI.
\end{defn}
In \parencite{ArjevaniShamir2016_dimensionfreea}, two types of oblivious oracles are considered. The generalized first order oracle for $F(x)=\frac{1}{n}\sum_{i=1}^n f_i(x)$
\[
O(w; A, B, C, j)=A\nabla f_j(w)+Bw+C, \quad A,B\in\mathbb{R}^{d\times d}, C\in \mathbb{R}^d, j\in [n].
\]
The steepest coordinate descent oracle for $F(x)=\frac{1}{n}\sum_{i=1}^n f_i(x)$ is given by
\[
O(w;i,j)=w+t^*e_i, \quad t^*\in\argmin_{t\in \mathbb{R}}f_j(w_1,...,w_{i-1}, w+t, w_{i+1},...,w_d), j\in [n],
\]
where $e_i$ is the $i$th unit vector. SDCA, SAG, SAGA, SVRG, SARAH, etc.  without proximal terms are all $p-$CLI oblivious algorithms.


We now state the full version of Theorem \ref{lower complexity of SVRG, SARAH}.

\begin{thm}[Lower complexity bound oblivious p-CLI algorithms] \label{lower complexity OPCLI}
For any oblivious p-CLI algorithm $A$, for all $\mu,L,k$, there exist $L$-smooth, and $\mu$-strongly convex functions $f_i$ such that at least\footnote{We absorb some smaller low-accuracy terms (high $\epsilon$) as is common practice. Exact lower bound expressions appear in the proof.}: 
\begin{align}
K\p{\eps} &=\tilde{\Omega}\p{\p{\frac{n}{1+(\ln(\frac{n}{\kappa}))_+}+\sqrt{n\kappa}}\ln{\frac{1}{\epsilon}}+n}
\label{Lower Bound Oblivious PCLI append}
\end{align}
iterations are needed for $A$ to obtain expected suboptimality $\EE\sp{f\p{K\p{\eps}}-f\p{X^*}}<\eps$.

\end{thm}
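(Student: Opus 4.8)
The plan is to follow the CLI-framework lower-bound machinery of \parencite{ArjevaniShamir2016_dimensionfreea}, but to replace the crude step in their Theorem 2 with the sharper combinatorial estimate that underlies the ``why are hybrid methods faster?'' discussion in Section 3. Concretely, I would use the same hard instance: $n$ decoupled blocks, each block being a (scaled, regularized) Nesterov tridiagonal quadratic, so that $f=\frac1n\sum_i f_i$ with each $f_i$ being $L$-smooth and $F$ being $\mu$-strongly convex, and the per-block condition number is roughly $1+(\kappa-1)/n$. The geometric decay $q_n$ of the block minimizer is then $q_n = \bigl(\sqrt{1+(\kappa-1)/n}-1\bigr)/\bigl(\sqrt{1+(\kappa-1)/n}+1\bigr)$, so $1-q_n^2 \asymp \min\{1,\sqrt{\kappa/n}\}$ in the regime $\kappa=\cO(n)$.

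First I would invoke the structural consequence of being oblivious $p$-CLI on this decoupled instance: since the oracle parameters $\theta^k_{ij}$ do not depend on oracle answers, the algorithm's trajectory on block $i$ is determined by the (random) schedule of which blocks are queried, and on a tridiagonal block the reachable subspace after $t$ queries to that block has ``last nonzero coordinate'' at most $t$ (plus a bounded additive term $p$ coming from the $p$ initialization points, which is why the $+n$ absorbs the low-accuracy regime). This reduces the analysis to controlling $\EE\sum_{i=1}^n q_n^{2 N(x_i^k)} \ge \EE\sum_{i=1}^n q_n^{2(C_{k,i}+p)}$, where $C_{k,i}$ counts how many of the first $k$ oracle calls touched block $i$. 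Because the algorithm is oblivious, this count is (stochastically) at most what a fixed schedule of $k$ calls distributes among $n$ blocks, so by convexity of $t\mapsto q_n^{2t}$ and Jensen in the other direction — or more carefully, by a direct second-moment / Chernoff argument — there must be $\Omega(n)$ blocks with $C_{k,i}=\cO(k/n)$ unless $k$ is large. Pinning this down: after $k = c\,n/(1+(\ln(n/\kappa))_+)$ total oracle calls, a constant fraction of blocks have been touched only $\cO(1/(1+(\ln(n/\kappa))_+))$ times, and for such a block $q_n^{2C_{k,i}} \ge \bigl(\min\{1,\sqrt{\kappa/n}\}\bigr)^{\cO(1/(1+\ln(n/\kappa)))}$, which one checks is bounded below by a constant; hence $\EE\|x^k-x^*\|^2/\|x^0-x^*\|^2 = \Omega(1)$, giving the $n/(1+(\ln(n/\kappa))_+)$ term of \eqref{Lower Bound Oblivious PCLI append}.

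Next, for the $\sqrt{n\kappa}\ln(1/\eps)$ term I would reuse, essentially verbatim, the spectral/resolvent lower bound from \parencite{ArjevaniShamir2016_dimensionfreea} (their ``first-order'' oblivious bound), which already delivers $\Omega\bigl(\sqrt{n\kappa}\ln(1/\eps)\bigr)$ for the same style of tridiagonal instance; since a lower bound is the max of the two obstructions, combining it with the combinatorial bound above yields the stated $\tilde\Omega$ expression, and translating from $\|x^k-x^*\|^2$-suboptimality to function-value suboptimality $\EE[f(x^k)-f(x^*)]$ costs only the usual $\mu/2$ and $L/2$ factors hidden in $\tilde\Omega$. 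The main obstacle, and the place where real care is needed, is the combinatorial step: one must show that \emph{obliviousness} genuinely forces the ``touch count'' $C_{k,i}$ to behave like an IID allocation (no adaptively clever scheduling can concentrate progress), and one must get the threshold for $k$ right — i.e. prove that $\EE\sum_i q_n^{2C_{k,i}}$ stays $\Omega(n)$ all the way up to $k \asymp n/(1+(\ln(n/\kappa))_+)$ rather than just $k\asymp n$, which is exactly the $\ln(n/\kappa)$-gain and requires the quantitative Chernoff tail of the allocation counts against the geometric weights $q_n^{2t}$, not merely a first-moment computation. Everything else — the tridiagonal span argument, the per-block strong-convexity/smoothness bookkeeping, the $p$-CLI initialization slack — is routine once that estimate is in hand, and I would relegate the exact constants and the precise low-$\eps$ cutoff (the footnoted ``absorbed'' terms) to the appendix.
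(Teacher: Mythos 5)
There is a genuine gap at the heart of your combinatorial step. Your inequality $N(x_i^k)\le C_{k,i}+p$ (the last nonzero coordinate of block $i$ grows only when block $i$ is queried) is precisely the span condition, and it fails for the class this theorem covers: an oblivious $p$-CLI algorithm with the generalized first-order oracle may return $A\nabla f_j(w)+Bw+C$ with arbitrary (obliviously chosen, but dense) $A,B,C$, so a single oracle call can populate every coordinate of every block; obliviousness restricts how the parameters $\theta^k_{ij}$ are selected, not the support of the iterates. Indeed the theorem's matching upper bound shows your mechanism cannot give the stated result: modified SVRG is itself an oblivious $p$-CLI algorithm and attains $\cO\big(\frac{n}{1+(\ln(n/\kappa))_+}\ln(1/\epsilon)\big)$ on exactly these decoupled tridiagonal instances, whereas the touch-count computation (for an IID uniform allocation, as in Section 3) gives $\EE\,\frac1n\sum_i q_n^{2I_{k,i}}=(1-n^{-1}(1-q_n^2))^k\approx(1-1/n)^k$ when $\kappa\ll n$, i.e.\ the Lan--Zhou bound $\Omega(n\ln(1/\epsilon))$, which is larger than the upper bound achieved by a member of the class --- so no support-growth argument can be valid here; the $\ln(n/\kappa)$ gain of hybrid methods on this very instance is the whole point of the paper. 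A secondary issue: even granting your counting estimate, the conclusion that the error is still $\Omega(1)$ after $k\asymp n/(1+(\ln(n/\kappa))_+)$ calls only lower-bounds the time to reach constant accuracy; it does not produce the multiplicative $\ln(1/\epsilon)$ factor, which requires a geometric-decay lower bound valid at every $k$.

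The paper's proof avoids this entirely: it takes from the proof of Theorem 2 of \parencite{ArjevaniShamir2016_dimensionfreea} (whose mechanism is obliviousness plus polynomial-approximation arguments on quadratics, not span growth) the intermediate inequality $\EE[F(x^K)-F(x^*)]\ge\frac{\mu}{4}\big(\frac{nR\mu}{L-\mu}\big)^2\big(\frac{\sqrt{1+(\kappa-1)/n}-1}{\sqrt{1+(\kappa-1)/n}+1}\big)^{2K/n}$ in the regime $n\ge\kappa/3$, and then extracts the claim by algebra alone: solving for $K$ with the elementary bound $1+x/3\le\sqrt{1+x}$ gives $K\gtrsim\frac{n}{\ln(1+6n/(\kappa-1))}\ln(1/\epsilon)$, the prefactor contributes $\frac{n\ln(6n/(\kappa-1))}{\ln(1+6n/(\kappa-1))}=\Omega(n)$, and $\ln(1+6n/(\kappa-1))\asymp 1+(\ln(n/\kappa))_+$ there; finally this is combined with the $\Omega(n+\sqrt{n\kappa}\ln(1/\epsilon))$ bound by choosing, for a given algorithm, whichever adversarial instance yields the larger bound. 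To repair your proposal, replace the touch-count lemma with that inequality (or a reproof of it); your remaining outline --- reusing the $\sqrt{n\kappa}$ term and combining the two obstructions by a max --- does match the paper.
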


\begin{proof}[Proof of Theorem \ref{lower complexity OPCLI}]


In this proof, we use lower bound given in \parencite[Thm 2]{ArjevaniShamir2016_dimensionfreea}, and refine its proof for the case $n\geq \frac{1}{3}\kappa$.

\parencite[Thm 2]{ArjevaniShamir2016_dimensionfreea} gives the following lower bound,

\begin{align}
    K(\epsilon)\geq \Omega(n+\sqrt{n(\kappa-1)}\ln{\frac{1}{\epsilon}}).\label{case 1}
\end{align}

Some smaller low-accuracy terms are absorbed are ignored, as is done in \parencite{ArjevaniShamir2016_dimensionfreea}. For the case $n\geq \frac{1}{3}\kappa$, the proof of \parencite[Thm 2]{ArjevaniShamir2016_dimensionfreea} tells us that, for any $k\geq 1$, there exist $L-$Lipschitz differentiable and $\mu-$strongly convex quadratic functions $f^k_1, f^k_2,...,f^k_n$ and $F^k=\frac{1}{n}\sum_{i=1}^n f^k_i$, such that  for any $x^0$, the $x^K$ produced after $K$ gradient evaluations, we have\footnote{note that for the SVRG in Algorithm \ref{alg_1} with $\psi=0$, each update in line $7$ is regarded as an iteration.}
\[
\mathbb{E}[F^K(x^K)-F^K(x^*)]\geq \frac{\mu}{4}(\frac{nR\mu}{L-\mu})^2(\frac{\sqrt{1+\frac{\kappa-1}{n}}-1}{\sqrt{1+\frac{\kappa-1}{n}}+1})^{\frac{2K}{n}},
\]
where $R$ is a constant and $\kappa=\frac{L}{\mu}$. 

Therefore, in order for $\epsilon\geq\mathbb{E}[F(x^k)-F(x^*)]$, we must have
\[
\epsilon \geq \frac{\mu}{4}(\frac{nR\mu}{L-\mu})^2(\frac{\sqrt{1+\frac{\kappa-1}{n}}-1}{\sqrt{1+\frac{\kappa-1}{n}}+1})^{\frac{2K}{n}}=\frac{\mu}{4}(\frac{nR\mu}{L-\mu})^2(1-\frac{2}{1+\sqrt{1+\frac{\kappa-1}{n}}})^{\frac{2k}{n}}.
\]
Since $1+\frac{1}{3}x\leq \sqrt{1+x}$ when $0\leq x \leq 3$, and $0\leq \frac{\kappa-1}{n}\leq \frac{\kappa}{n}\leq 3$, we have
\[
\epsilon\geq \frac{\mu}{4}(\frac{nR\mu}{L-\mu})^2(1-\frac{2}{2+\frac{1}{3}\frac{\kappa-1}{n}})^{\frac{2K}{n}},
\]
or equivalently,
\[
K\geq \frac{n}{2\ln(1+\frac{6n}{\kappa-1})}\ln\big(\frac{\frac{\mu}{4}(\frac{nR}{\kappa-1})^2}{\epsilon}\big).
\]
As a result,
\begin{align*}
    K&\geq\frac{n}{2\ln(1+\frac{6n}{\kappa-1})}\ln\frac{1}{\epsilon}+\frac{n}{2\ln(1+\frac{6n}{\kappa-1})}\ln\big(\frac{\mu}{4}(\frac{nR}{\kappa-1})^2\big)\\
    &=\frac{n}{2\ln(1+\frac{6n}{\kappa-1})}\ln\frac{1}{\epsilon}+\frac{n}{2\ln(1+\frac{6n}{\kappa-1})}\ln(\frac{\mu R^2}{24})+\frac{n}{\ln(1+\frac{6n}{\kappa-1})}\ln\frac{6n}{\kappa-1}.
\end{align*}
Since $\frac{\ln\frac{6n}{\kappa-1}}{\ln(1+\frac{6n}{\kappa-1})}\geq \frac{\ln2}{\ln 3}$ when $\frac{n}{\kappa -1}\geq \frac{n}{\kappa}\geq \frac{1}{3}$, for small $\epsilon$ we have
\begin{align}
    K&\geq \frac{n}{2\ln(1+\frac{6n}{\kappa-1})}\ln\frac{1}{\epsilon}+\frac{n}{2\ln(1+\frac{6n}{\kappa-1})}\ln(\frac{\mu R^2}{24})+\frac{\ln 2}{\ln 3}n\nonumber\\
    &=\Omega\big(\frac{n}{\ln(1+\frac{6n}{\kappa-1})}\ln\frac{1}{\epsilon}\big)+\frac{\ln 2}{\ln 3}n\\
    &= \Omega(\frac{n}{1+(\ln(n/\kappa))_+}\ln(1/\epsilon)+n)\label{case 2}
\end{align}
Now the expression in \eqref{case 2} is valid for $n\geq\frac{1}{3}\kappa$. When $n<\frac{1}{3}\kappa$, the lower bound in \eqref{case 2} is asymptotically equal to $\Omega(n\ln(1/\epsilon)+n)$, which is dominated by \eqref{case 1}. Hence the lower bound in \eqref{case 2} is valid for all $\kappa,n$. 

We may sum the lower bounds in \eqref{case 1} and \eqref{case 2} to obtain \eqref{Lower Bound Oblivious PCLI append}. This is because given an oblivious p-CLI algorithm, we may simply chose the adversarial example that has the corresponding greater lower bound.
\end{proof}

\section{Lower Complexity Bound for SDCA}\label{App:lower complexity of SDCA}
\begin{proof}[Proof of Propsition \ref{lower complexity of SDCA}]
Let $\phi_i(t)=\frac{1}{2}t^2$, $\lambda=\mu$, and $y_i$ be the $i$th column of $Y$, where $Y=c(n^2I+J)$ and $J$ is the matrix with all elements being $1$, and $c=(n^4+2n^2+n)^{-1/2}(L-\mu)^{1/2}$. Then 
\begin{align*}
f_i(x)&=\frac{1}{2}(x^Ty_i)^2+\frac{1}{2}\mu\|x\|^2,
\\
F(x)&=\frac{1}{2n}\|Y^Tx\|^2+\frac{1}{2}\mu\|x\|^2,\\
D(\alpha)&=\frac{1}{n\mu}(\frac{1}{2n}\|Y\alpha\|^2+\frac{1}{2}\mu\|\alpha\|^2).
\end{align*}
Since 
\[
\|y_i\|^2=c^2\big((n^2+1)^2+n-1\big)=c^2(n^4+2n^2+n)=L-\mu,
\]
$f_i$ is $L-$smooth and $\mu-$strongly convex, and that $x^*=\mathbf{0}$.

We also have
\[
\nabla D(\alpha)=\frac{1}{n\mu}(\frac{1}{n}Y^2\alpha+\mu\alpha)=\frac{1}{n\mu}\big((c^2n^3I+2nc^2J+c^2J)\alpha +\mu\alpha\big),
\]
So for every $k\geq 0$, minimizing with respect to $\alpha_{i_k}$ as in \eqref{eq:SDCA} yields the optimality condition:
\begin{align*}
0&=e_{i_k}^T \nabla D(\alpha^{k+1})\\
&=\frac{1}{n\mu}\big(c^2n^3\alpha^{k+1}_{i_k}+2c^2n(\sum_{j\neq i_k} \alpha^{k}_j+\alpha^{k+1}_{i_k})+c^2(\sum_{j\neq i_k} \alpha^{k}_j+\alpha^{k+1}_{i_k})+\mu \alpha^{k+1}_{i_k}\big).
\end{align*}
Therefore, rearranging yields:
\[
\alpha^{k+1}_{i_k}=-\frac{(c^2+2c^2n)}{c^2n^3+2c^2n+c^2+\mu}\sum_{j\neq i_k}\alpha^k_j=-\frac{(c^2+2c^2n)}{c^2n^3+2c^2n+c^2+\mu}(e_{i_k}^T(J-I)\alpha^k).
\]
As a result,
\[
\alpha^{k+1}=(I-e_{i_k}e_{i_k}^T)\alpha^k-\frac{(c^2+2c^2n)}{c^2n^3+2c^2n+c^2+\mu}(e_{i_k}e_{i_k}^T(J-I)\alpha^k).
\]
Taking full expectation on both sides gives
\[
\mathbb{E}\alpha^{k+1}=\Big((1-\frac{1}{n})I-\frac{(c^2+2c^2n)}{c^2n^3+2c^2n+c^2+\mu}\frac{J-I}{n}\Big)\mathbb{E}\alpha^k\triangleq T\EE\alpha^k.
\]
for linear operator $T$. Hence we have by Jensen's inequality:
\begin{align*}
\EE\n{x^{k}}^{2} & =n^{-2}\mu^{-2}\EE\n{Y\alpha^{k}}^{2}\\
 & \geq n^{-2}\mu^{-2}\n{Y\EE\alpha^{k}}^{2}\\
 & =n^{-2}\mu^{-2}\n{YT^{k}\alpha^{0}}^{2}
\end{align*}
We let $\alpha^{0}=\p{1,\ldots,1}$,
which is an vector of $T$. Let us say the corresponding eigenvalue
for $T$ is $\theta$:
\begin{align}
\EE\n{x^{k}}^{2} & \geq\theta^{2k}n^{-2}\mu^{-2}\n{Y\alpha^{0}}^{2}\\
 & =\theta^{2k}\n{x^{0}}^{2}\label{eq:Theta-bound}
\end{align}
We now analyze the value of $\theta$:
\begin{align*}
 \theta & =(1-\frac{1}{n})-\frac{(c^{2}+2c^{2}n)}{c^{2}n^{3}+2c^{2}n+c^{2}+\mu}\frac{n-1}{n}\\
 & =1-\frac{1}{n}-\frac{1+2n}{n^{3}+2n+1+\mu c^{-2}}\frac{n-1}{n}\\
 & \geq1-\frac{1}{n}-\frac{1+2n}{n^{3}+2n+1}\\
 & \geq1-\frac{2}{n}
\end{align*}
for $n>2$. This in combination with \eqref{eq:Theta-bound} yields \eqref{lower bound of SDCA}.
\end{proof}

\section{Nonconvex SVRG Analysis}\label{App:Nonconvex-SVRG}

\begin{proof}[Proof of Theorem \ref{nonvex upper complexity}]
Without loss of generality, we can assume $x^*=\mathbf{0}$ and $F(x^*)=0.$

According to lemma 3.3 and Lemma 5.1 of \parencite{Allen-Zhu2018_katyusha}, for any $u\in\RR^d$, and $\eta\leq\frac{1}{2}\min\cp{\frac{1}{L},\frac{1}{\sqrt{m}\bar{L}}}$ we have
\[
\mathbb{E}[F(x^{j+1})-F(u))]\leq \mathbb{E}[-\frac{1}{4m\eta}\|x^{j+1}-x^j\|^2+\frac{\langle x^j-x^{j+1}, x^j-u\rangle}{m\eta}-\frac{\mu}{4}\|x^{j+1}-u\|^2],
\]
or equivalently,
\[
\mathbb{E}[F(x^{j+1})-F(u))]\leq \mathbb{E}[\frac{1}{4m\eta}\|x^{j+1}-x^j\|^2+\frac{1}{2m\eta}\|x^j-u\|^2-\frac{1}{2m\eta}\|x^{j+1}-u\|^2-\frac{\mu}{4}\|x^{j+1}-u\|^2].
\]
Setting $u=x^*=0$ and $u=x^j$ yields the following two inequalities:
\begin{align}
    F(x^{j+1})&\leq \frac{1}{4m\eta}(\|x^{j+1}-x^j\|^2+2\|x^j\|^2-2(1+\frac{1}{2}m\eta\mu)\|x^{j+1}\|^2),\label{1}\\
    F(x^{j+1})-F(x^j)&\leq-\frac{1}{4m\eta}(1+m\eta\mu)\|x^{j+1}-x^j\|^2.\label{2}
\end{align}
Define $\tau=\frac{1}{2}m\eta\mu$, multiply $(1+2\tau)$ to \eqref{1}, then add it to \eqref{2} yields
\[
2(1+\tau)F(x^{j+1})-F(x^j)\leq \frac{1}{2m\eta}(1+2\tau)\big(\|x^j\|^2-(1+\tau)\|x^{j+1}\|\big).
\]
Multiplying both sides by $(1+\tau)^j$ gives
\[
2(1+\tau)^{j+1}F(x^{j+1})-(1+\tau)^jF(x^j)\leq \frac{1}{2m\eta}(1+2\tau)\big((1+\tau)^j\|x^j\|^2-(1+\tau)^{j+1}\|x^{j+1}\|\big).
\]
Summing over $j=0,1,...,k-1$, we have
\[
(1+\tau)^k F(x^k)+\sum_{j=0}^{k-1}(1+\tau)^j F(x^j)-F(x^0)\leq \frac{1}{2m\eta}(1+2\tau)(\|x^0\|^2-(1+\tau)^k\|x^k\|^2).
\]
Since $F(x^j)\geq 0$, we have
\[
F({x}^k)(1+\tau)^k \leq F(x^0)+\frac{1}{2m\eta}(1+2\tau)\|x^0\|^2.
\]
By the strong convex of $F$, we have $F(x^0)\geq \frac{\mu}{2}\|x^0\|^2$, therefore
\[
F({x}^k)(1+\tau)^k\leq F(x^0)(2+\frac{1}{2\tau}),
\]
Finally, $\eta=\frac{1}{2}\min\{\frac{1}{L}, (\frac{1}{\overline{L}^2m})^{\frac{1}{2}}\}$ gives 
\[
\frac{1}{\tau}=4\max\{\frac{\kappa}{m}, (\frac{\overline{L}^2}{m\mu^2})^{\frac{1}{2}}\}\leq 4(\frac{\kappa}{m}+(\frac{\overline{L}^2}{m\mu^2})^{-\frac{1}{2}}),
\]
which yields
\[
F(x^k)\leq (1+\tau)^{-k}F(x^0)\big(2+2(\frac{\kappa}{m}+(\frac{\overline{L}^2}{m\mu^2})^{-\frac{1}{2}})\big).
\]
To prove \eqref{nonconvex linear rate}, we notice that
\[
\tau=\frac{1}{4}\min\{\frac{m}{\kappa}, (\frac{m\mu^2}{\overline{L}^2})^{\frac{1}{2}}\}, 
\]
so we have
\[
\frac{1}{\ln(1+\tau)}\leq \frac{1}{\ln(1+\frac{m}{4\kappa})}+\frac{1}{\ln\big(1+(\frac{m\mu^2}{4\overline{L}})^{\frac{1}{2}}\big)}
\]
Now for small $\epsilon$, the epoch complexity can be written as
\begin{align*}
K_0&=\lceil\frac{1}{\ln(1+\tau)}\ln\frac{F(x^0)(2+2(\frac{\kappa}{m}+(\frac{\overline{L}^2}{m\mu^2})^{-\frac{1}{2}}))}{\epsilon}\rceil\\
&\leq\cO\Big((\frac{1}{\ln(1+\frac{m}{4\kappa})}+\frac{1}{\ln\big(1+(\frac{m\mu^2}{4\overline{L}})^{\frac{1}{2}}\big)})\ln\frac{1}{\epsilon}\Big)+1.
\end{align*}
Since $m=\min\{2,n\}$, we have a gradient complexity of
\[
K=(n+m)K_0\leq\cO\Big((\frac{n}{\ln(1+\frac{n}{4\kappa})}+\frac{n}{\ln\big(1+(\frac{n\mu^2}{4\overline{L}})^{\frac{1}{2}}\big)})\ln\frac{1}{\epsilon}\Big)+2n.
\]
And this is equivalent to the expression in \eqref{nonconvex K}.

\end{proof}

\end{document}